\documentclass[12pt]{article}
\usepackage[utf8x]{inputenc}
\usepackage[left=20mm, right=20mm, top=20mm, bottom=20mm]{geometry}
\usepackage{color, hyperref}
\usepackage{amsmath}
\usepackage{tikz-cd}
\usepackage{amssymb}
\usepackage{amsfonts}
\usepackage{booktabs}
\usepackage{amsthm,amscd}
\usepackage{authblk}
\usepackage{tabularx}

\newtheorem{theorem}{Theorem}[section]

\newtheorem{corollary}[theorem]{Corollary}
\newtheorem{proposition}[theorem]{Proposition}
\theoremstyle{definition}
\newtheorem{definition}[theorem]{Definition}
\newtheorem{example}[theorem]{Example}

\theoremstyle{remark}
\newtheorem{remark}[theorem]{Remark}

\begin{document}

	\title{Arithmetic Aspects of Weil Bundles over $p$-Adic Manifolds}
	\author{S. Tchuiaga$^a$\footnote{$^a$Corresponding Author} and C. Dor Kewir$^\star$
		
		$^a$Department of Mathematics of the University of Buea, 
		South West Region, Cameroon\\
		$^\star$Department of Mathematics of the University of Buea, 
		South West Region, Cameroon\\
		{\tt E-mail: tchuiagas@gmail.com}\\
	{\tt E-mail: dorkewir@yahoo.com}	}
	
	\date{\today}
	
	\maketitle
	
	\begin{quotation}
		\noindent
		{\footnotesize
		
		We introduce a systematic theory of Weil bundles over \( p \)-adic analytic manifolds, forging new connections between differential calculus over non-archimedean fields and arithmetic geometry. By developing a framework for infinitesimal structures in the \( p \)-adic setting, we establish that Weil bundles \( M^A \) associated with a \( p \)-adic manifold \( M \) and a Weil algebra \( A \) inherit a canonical analytic structure. Key results include:
		 \text{Lifting theorems :} for analytic functions, vector fields, and connections, enabling the transfer of geometric data from \( M \) to \( M^A \). A \text{Galois-equivariant structure :} on Weil bundles defined over number fields, linking their geometry to arithmetic symmetries.
			 A \text{cohomological comparison isomorphism:} between the Weil bundle \( M^A \) and the crystalline cohomology of \( M \), unifying infinitesimal and crystalline perspectives. Applications to Diophantine geometry and \( p \)-adic Hodge theory are central to this work. We show that spaces of sections of Hodge bundles on \( M^A \) parametrize \( p \)-adic modular forms, offering a geometric interpretation of deformation-theoretic objects. Furthermore,  Weil bundles are used to study infinitesimal solutions of equations on elliptic curves, revealing new structural insights into \( p \)-adic deformations. 
		}
	\end{quotation}
	\ \\
	{\bf Keywords:} \( p \)-adic manifolds, Weil bundles, arithmetic geometry, crystalline cohomology, Galois representations, \( p \)-adic modular forms, Diophantine equations.\\
	\textbf{2020 Mathematics Subject Classification:} 14G22, 11G07, 14F30, 14L05.

	\markboth 
	{St\'ephane Tchuiaga}
	{$p$-Adic Manifolds}
	
	\vspace{1em}
	

\section{Introduction}\label{sec:intro}
The study of infinitesimal structures spaces encoding infinitesimally small deformations has been instrumental in both differential geometry and algebraic geometry. In his pioneering work, Weil introduced bundles associated to local Artin algebras (now called \emph{Weil bundles}) to formalize the geometry of jet spaces and connections over smooth manifolds \cite{Wei}. While these ideas flourished in the archimedean setting, their \( p \)-adic analogues remain underexplored, despite the central role of \( p \)-adic manifolds in modern number theory and the Langlands program. This paper addresses this gap by constructing and analyzing Weil bundles over \( p \)-adic analytic manifolds, with an emphasis on their arithmetic properties and applications to \( p \)-adic Hodge theory.

A \( p \)-adic manifold \( M \), unlike its real counterpart, is totally disconnected and locally modeled on rigid analytic spaces or Berkovich spaces. Such manifolds arise naturally in arithmetic contexts, such as moduli spaces of \( p \)-adic Galois representations or the \( p \)-adic uniformization of Shimura varieties. By associating to \( M \) a Weil bundle \( M^A \), where \( A \) is a Weil algebra over \( \mathbb{Q}_p \), we equip \( M \) with a hierarchy of infinitesimal thickenings. These thickenings encode deformations of geometric structures such as functions, vector fields, or connections while preserving analytic coherence.

\subsection*{Main contributions}
\begin{enumerate}
	\item \textbf{Analytic lifting framework}: We prove that \( M^A \) is naturally a \( p \)-adic analytic manifold (Theorem~\ref{Lift-AF-1}) and construct a canonical projection \( \pi_A: M^A \to M \). This enables the lifting of analytic functions (Theorem~\ref{Lift-AF-2}), differential forms (Theorem~\ref{thm:lifting-forms}), and connections (Theorem~\ref{thm:lifting-connections}) to \( M^A \), generalizing the classical Weil functoriality to the non-archimedean realm, Vector fields Theorem~\ref{thm: vec-lifting-1}, \ref{thm: vec-lifting-2}. 
	
	\item \textbf{Arithmetic symmetries}: For \( M \) defined over a number field \( K \), we show that the absolute Galois group \( \mathrm{Gal}(\overline{K}/K) \) acts on \( M^A \) in a manner compatible with its analytic structure (Theorem~\ref{thm:galois-action}). This Galois-equivariance provides a geometric realization of arithmetic deformation theory.
	
	\item \textbf{Crystalline cohomology comparison}: By relating the cohomology of \( M^A \) to the crystalline cohomology of \( M \) (Theorem~\ref{co-com}, \ref{co-com-2}), we establish a \( p \)-adic analogue of the de Rham–Weil isomorphism, synthesizing techniques from crystalline cohomology and Lie algebroids.
	
	\item \textbf{Modular forms and Diophantine geometry}: We identify sections of Hodge bundles on \( M^A \) with \( p \)-adic modular forms (Theorem~\ref{thm:modular-forms}), extending the work of Katz \cite{K-1} to the \( p \)-adic setting. Applications include a novel interpretation of infinitesimal solutions to Diophantine equations on elliptic curves (Corollary~\ref{cor:diophantine}).
\end{enumerate}
Our approach synthesizes tools from \( p \)-adic analytic geometry, arithmetic deformation theory, and Hodge theory. Weil algebras, which encode nilpotent infinitesimal structures, serve as the algebraic backbone for constructing \( M^A \). The non-archimedean topology of \( \mathbb{Q}_p \) necessitates novel techniques for lifting analytic objects, as classical methods (e.g., partitions of unity) are unavailable. Key innovations include the use of \emph{analytic prolongations}  and a \( p \)-adic adaptation of Grothendieck’s equivalence between connections and parallel transport \cite{B-1}.
The structure of the paper is as follows: Section \ref{S1-2}, and \ref{S1-3} review \( p \)-adic manifolds and define Weil bundles. Section \ref{Lifting} develop lifting theorems for functions, tangent spaces, and connections. Section \ref{Galois} studies Galois actions and arithmetic structures. In section \ref{Cohomology} we establish the cohomological comparison with crystalline cohomology. Finally, section \ref{Applications} deals with applications to modular forms and Diophantine geometry.\\
	
	This work lays the foundation for a deeper synthesis of \( p \)-adic geometry and arithmetic, with potential extensions to perfectoid geometry and the \( p \)-adic Langlands correspondence. By elucidating the infinitesimal architecture of \( p \)-adic manifolds, we aim to unlock new tools for exploring the arithmetic of moduli spaces and Shimura varieties.
	
	\section{Preliminaries}\label{S1-2}
	
	\subsection{The field of $p$-Adic numbers $\mathbb{Q}_p$, \cite{Fon94}}
	 The field of $p$-adic numbers, denoted $\mathbb{Q}_p$, is constructed as the completion of the rational numbers $\mathbb{Q}$ with respect to the $p$-adic absolute value.  This construction proceeds in several steps: 
	For a prime number $p$, the $p$-adic absolute value $|\cdot|_p$ on $\mathbb{Q}$ is defined as follows:
	 For any non-zero rational number $x \in \mathbb{Q}$, we can uniquely write $x = p^n \frac{a}{b}$, where $a$ and $b$ are integers not divisible by $p$, and $n \in \mathbb{Z}$. The $p$-adic absolute value of $x$ is then defined as: $ |x|_p = p^{-n} $ if  $x \neq 0$ , and $|0|_p  = 0$. This absolute value satisfies the following properties: $|x|_p \geq 0$ for all $x \in \mathbb{Q}$, and $|x|_p = 0$ if and only if $x = 0$,  $|xy|_p = |x|_p |y|_p$ for all $x, y \in \mathbb{Q}$, and 
		 the ultrametric inequality : $|x + y|_p \leq \max\{|x|_p, |y|_p\}$ for all $x, y \in \mathbb{Q}$. 	The ultrametric inequality is a key difference between the $p$-adic absolute value and the usual absolute value on $\mathbb{R}$. 
	The $p$-adic absolute value induces a metric $d_p$ on $\mathbb{Q}$ defined by:
	$ d_p(x, y) = |x - y|_p .$
	This metric satisfies the ultrametric inequality:
	$ d_p(x, z) \leq \max\{d_p(x, y), d_p(y, z)\}$ \cite{Fon94}.

	\section*{Topology of $\mathbb{Q}_p$,  \cite{Fon94}}

	An open ball in $\mathbb{Q}_p$ centered at $a$ with radius $r$ is defined as:
	$B(a, r) = \{ x \in \mathbb{Q}_p : |x - a|_p < r \}.$\\
	A closed ball in $\mathbb{Q}_p$ centered at $a$ with radius $r$ is defined as:
	$ \overline{B}(a, r) = \{ x \in \mathbb{Q}_p : |x - a|_p \leq r \}.$\\
	Due to the ultrametric inequality, open balls are also closed, and closed balls are also open.  Moreover, if two balls intersect, one is contained in the other.
	\begin{remark}\label{R-1-C}
		The field of $p$-adic numbers $\mathbb{Q}_p$ is the completion of $\mathbb{Q}$ with respect to the metric $d_p$. This means: $\mathbb{Q}_p$ is a complete metric space with respect to the $p$-adic metric.  A metric space is complete if every Cauchy sequence converges.  There exists an isometric embedding of $\mathbb{Q}$ into $\mathbb{Q}_p$, and $\mathbb{Q}$ is dense in $\mathbb{Q}_p$. The completion process is analogous to constructing the real numbers $\mathbb{R}$ from the rational numbers $\mathbb{Q}$ using Cauchy sequences. The set $\mathbb{Q}_p$ is locally compact. This means that every point in $\mathbb{Q}_p$ has a neighborhood whose closure is compact. This property is crucial for many results in analysis.
	\end{remark}
	The ring of $p$-adic integers $\mathbb{Z}_p$ is defined as:
	$ \mathbb{Z}_p = \{ x \in \mathbb{Q}_p : |x|_p \leq 1 \}.$ 
	$\mathbb{Z}_p$ is a complete, compact, and open subset of $\mathbb{Q}_p$.  It is the closed unit ball centered at $ 0$. 
	Any $p$-adic integer can be uniquely represented as an infinite series:
	$ x = a_0 + a_1 p + a_2 p^2 + a_3 p^3 + \dots = \sum_{i=0}^{\infty} a_i p^i, $
	where $a_i \in \{0, 1, 2, \dots, p-1\}$. These $a_i$ are called the digits of $x$. Note that 	$\mathbb{Q}_p$ is totally disconnected. This means that the only connected subsets of $\mathbb{Q}_p$ are single points. This is a stark contrast to the real numbers, which are connected. Note that 
	$\mathbb{Q}_p$ is a non-archimedean field, and $\mathbb{Q}_p$ has uncountably many elements.
	 The units in $\mathbb{Z}_p$ are given by $\{ x \in \mathbb{Z}_p : |x|_p = 1 \}$.
	 $\mathbb{Z}_p$ is a principal ideal domain with a unique maximal ideal generated by $p$.

	\section*{$P$-adic differentiability and analyticity, \cite{BMS18}}
	
	The concepts of differentiability and analyticity in $p$-adic analysis differ significantly from their counterparts in real analysis due to the non-archimedean nature of the $p$-adic absolute value.
	
	\begin{definition}\label{P-diff-1}
		Let $f: U \to \mathbb{Q}_p$ be a function defined on an open subset $U \subseteq \mathbb{Q}_p$. We say that $f$ is differentiable at a point $x \in U$ if the limit
		$ f'(x) = \lim_{h \to 0} \frac{f(x+h) - f(x)}{h},$
		exists in $\mathbb{Q}_p$.  If this limit exists for all $x \in U$, we say that $f$ is differentiable on $U$.
	\end{definition}
	
	Note that the limit is taken with respect to the $p$-adic metric.  The standard rules of differentiation (sum rule, product rule, chain rule) still hold in the $p$-adic setting.  A crucial difference is that a $p$-adic differentiable function need not be continuous, and a continuous function need not be differentiable. This is unlike real analysis.
		 A function can be differentiable everywhere but not locally analytic \cite{BMS18}.

	\begin{definition}\cite{BGR84}
		Let $f: U \to \mathbb{Q}_p$ be a function defined on an open subset $U \subseteq \mathbb{Q}_p$.  We say that $f$ is analytic at a point $x \in U$ if there exists an open ball $B(x, r) \subseteq U$ with $r > 0$ such that $f$ can be represented by a convergent power series:
		$ f(y) = \sum_{n=0}^{\infty} a_n (y - x)^n,$
		for all $y \in B(x, r)$, where $a_n \in \mathbb{Q}_p$.  If $f$ is analytic at every point in $U$, we say that $f$ is analytic on $U$.
	\end{definition}
	
	Analytic functions are infinitely differentiable, but the converse is not always true. However, the following criterion for power series convergence is very useful: the power series $\sum a_n x^n$ converges in the disc $|x|_p<R$ if and only if $|a_n| R^n \rightarrow 0$.
	\begin{remark}\label{R-2-D-A}
		In contrast to real analysis, a function can be differentiable everywhere in $\mathbb{Q}_p$ without being analytic anywhere.
		
	\end{remark}

	\begin{theorem}[Mahler's Theorem]\cite{Ber02}
		Let $f: \mathbb{Z}_p \to \mathbb{Q}_p$ be a continuous function. Then $f$ can be uniquely represented by the Mahler expansion:
		$f(x) = \sum_{n=0}^{\infty} a_n \binom{x}{n},$
		where the Mahler coefficients $a_n$ are given by
		$ a_n = \sum_{k=0}^n (-1)^{n-k} \binom{n}{k} f(k), $
		and $\binom{x}{n}$ denotes the binomial polynomial
		$$ \binom{x}{n} = \frac{x(x-1)(x-2)\dots(x-n+1)}{n!}.$$
		Furthermore, $f$ is continuous if and only if $|a_n|_p \to 0$ as $n \to \infty$.
	\end{theorem}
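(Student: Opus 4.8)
The plan is to identify the Mahler coefficients with iterated finite differences, dispose of the polynomial case by Newton's forward–difference formula, upgrade to the continuous case by a compactness/uniform–continuity estimate on the difference operator, and then settle convergence and uniqueness using density of $\mathbb{N}$ in $\mathbb{Z}_p$. First I would introduce the forward difference operator $\Delta$, $(\Delta g)(x) = g(x+1) - g(x)$, and verify by induction the identity $(\Delta^n g)(0) = \sum_{k=0}^{n} (-1)^{n-k}\binom{n}{k} g(k)$, so that the asserted formula reads simply $a_n = (\Delta^n f)(0)$. I would also record the elementary facts $\Delta\binom{x}{n} = \binom{x}{n-1}$ (Pascal's rule) and that the binomial polynomials satisfy $\lvert\binom{x}{n}\rvert_p \le 1$ for $x \in \mathbb{Z}_p$ — either because $\binom{x}{n}$ is a uniform limit of the integers $\binom{m}{n}$, $m \in \mathbb{N}$, or by a direct $p$-adic estimate. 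With these in hand, Newton's forward–difference formula $g(m) = \sum_{n\ge 0}(\Delta^n g)(0)\binom{m}{n}$ (a finite sum, proved by induction on $m\in\mathbb{N}$) gives the expansion on all of $\mathbb{N}$.

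The analytic core is the implication: $f$ continuous $\Rightarrow \lvert a_n\rvert_p \to 0$. After rescaling I may assume $f : \mathbb{Z}_p \to \mathbb{Z}_p$. The key lemma is: if $N$ is chosen — using uniform continuity of $f$ on the compact space $\mathbb{Z}_p$ — so that $x \equiv y \pmod{p^N}$ forces $f(x) \equiv f(y) \pmod p$, then $\Delta^{p^N} f$ is $p\mathbb{Z}_p$-valued. This follows from $\binom{p^N}{j} \equiv 0 \pmod p$ for $0 < j < p^N$, which kills all the middle terms of $\Delta^{p^N} f(x) = \sum_{j=0}^{p^N} (-1)^{p^N - j}\binom{p^N}{j} f(x+j)$ modulo $p$, leaving $(-1)^{p^N} f(x) + f(x+p^N) \equiv 0 \pmod p$ (the case $p = 2$ handled separately, noting $2f(x) \equiv 0$). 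Iterating — apply the lemma to $p^{-1}\Delta^{p^N} f$ with a possibly larger $N$, and use that $\Delta$ preserves $p^m\mathbb{Z}_p$-valuedness — produces, for every $m$, an index beyond which every $\Delta^n f$ is $p^m\mathbb{Z}_p$-valued; evaluating at $0$ gives $a_n \to 0$. Conversely, if $\lvert a_n\rvert_p \to 0$, then since $\lvert\binom{x}{n}\rvert_p \le 1$ on $\mathbb{Z}_p$ the series $\sum_n a_n\binom{x}{n}$ converges uniformly on $\mathbb{Z}_p$ and hence defines a continuous function $g$.

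It remains to show $g = f$ and to prove uniqueness. By the previous step $g$ is continuous; for $x = m \in \mathbb{N}$ the series truncates at $n = m$ and Newton's formula gives $g(m) = \sum_{n\le m}(\Delta^n f)(0)\binom{m}{n} = f(m)$, so $g$ and $f$ agree on the dense set $\mathbb{N}$ and therefore coincide on $\mathbb{Z}_p$. For uniqueness, suppose $\sum_n b_n\binom{x}{n} \equiv 0$ with $b_n \to 0$; evaluating at $x = 0$ gives $b_0 = 0$, applying $\Delta$ termwise (justified by uniform convergence, using $\Delta\binom{x}{n} = \binom{x}{n-1}$) and evaluating at $0$ gives $b_1 = 0$, and inductively $b_n = 0$ for all $n$. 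Hence in any valid expansion the coefficients are forced to be $(\Delta^n f)(0)$, which is the claimed closed form.

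The step I expect to be the main obstacle is the contraction estimate for $\Delta^{p^N}$ in the analytic core: getting the divisibility bookkeeping for $\binom{p^N}{j}$ exactly right — in particular the $p = 2$ subtlety — and organizing the iteration so that the decay rate of $\lvert a_n\rvert_p$ is genuinely uniform in $n$. Everything else reduces to routine manipulation with uniform limits of polynomials and the density of $\mathbb{N}$ in $\mathbb{Z}_p$.
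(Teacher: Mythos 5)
Your proposal is correct, but note that the paper itself offers no proof of this statement: Mahler's theorem is quoted there as a classical background result with a citation to Berger, so there is no internal argument to compare against. What you have written is essentially the standard Bojanic-style proof: identify $a_n=(\Delta^n f)(0)$, use Newton's forward-difference formula on $\mathbb{N}$, prove the key contraction lemma that $\Delta^{p^N}f$ is $p\mathbb{Z}_p$-valued once $N$ is chosen by uniform continuity (using $p\mid\binom{p^N}{j}$ for $0<j<p^N$, with the $p=2$ sign handled as you indicate), iterate to get $|a_n|_p\to 0$, and then use $|\binom{x}{n}|_p\le 1$, uniform convergence, and density of $\mathbb{N}$ in $\mathbb{Z}_p$ for agreement and uniqueness. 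The one place to be explicit when writing this up is the bookkeeping of the iteration: after the first step you pass to $g=p^{-1}\Delta^{p^{N_1}}f$, choose $N_2$ for $g$, and conclude that $\Delta^n f$ is $p^2\mathbb{Z}_p$-valued for all $n\ge p^{N_1}+p^{N_2}$ (using that $\Delta$ preserves $p^m\mathbb{Z}_p$-valuedness), so the thresholds accumulate additively; this gives the uniform decay you want, and the same remark justifies the ``possibly larger $N$'' phrase. The termwise application of $\Delta$ in the uniqueness step is legitimate exactly as you say, since $\Delta$ is a difference of two evaluations of a uniformly convergent series. With those details spelled out, your argument is a complete and self-contained proof of the statement the paper only cites.
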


	\begin{remark}\label{R-3-Con}
		Mahler's theorem provides a very useful characterization of continuity and is a fundamental tool for studying functions on $\mathbb{Z}_p$.
		While Mahler's theorem characterizes continuity, one can also use Mahler's coefficients to infer the smoothness of $f$, by finding out whether the sequence $\frac{a_n}{n}$ is a null sequence. The $p$-adic differentiability and analyticity are richer than the usual real analysis notions. They are crucial for understanding the behavior of functions over non-archimedean fields. Mahler's theorem is a fundamental tool for studying continuous and differentiable functions on $\mathbb{Z}_p$.
		
	\end{remark}

		A function $f: U \to \mathbb{Q}_p$, where $U$ is an open subset of $\mathbb{Q}_p^n$, is differentiable (or analytic) if its partial derivatives exist (and satisfy certain conditions, depending on the chosen definition of differentiability/analyticity).

	\section{$p$-Adic manifolds}\label{S1-3}
	
	A $p$-adic manifold is a topological space locally homeomorphic to $\mathbb{Q}_p^n$, equipped with a differentiable (or analytic) structure \cite{And73}. A $p$-adic manifold is covered by charts, where each chart is a homeomorphism from an open subset of the manifold to an open subset of $\mathbb{Q}_p^n$. Transition maps between charts are required to be differentiable (or analytic) in the $p$-adic sense.  Each point in the manifold has a neighborhood that can be described by $n$ $p$-adic coordinates.

	\begin{example}
			The field of $p$-adic numbers $\mathbb{Q}_p$ is itself a 1-dimensional $p$-adic manifold. The identity map $id: \mathbb{Q}_p \to \mathbb{Q}_p$ serves as a global chart. The atlas consists of just this one chart.
		Since there's only one chart, there are no transition maps to worry about.  Any $p$-adic differentiable or analytic function $f: U \to \mathbb{Q}_p$, where $U$ is an open subset of $\mathbb{Q}_p$, provides a local coordinate system.
	\end{example}

	\begin{example}
			The vector space $\mathbb{Q}_p^n$ is an $n$-dimensional $p$-adic manifold. A point in $\mathbb{Q}_p^n$ is given by $(x_1, x_2, \dots, x_n)$, where $x_i \in \mathbb{Q}_p$. These are the standard coordinates.
		The identity map from $\mathbb{Q}_p^n$ to itself provides a global chart.
		Consists of just the identity chart.
	\end{example}

	\begin{example}

	Projective space $\mathbb{P}^n(\mathbb{Q}_p)$ is a $p$-adic manifold of dimension $n$. It's defined as the set of lines through the origin in $\mathbb{Q}_p^{n+1}$.  Points in $\mathbb{P}^n(\mathbb{Q}_p)$ are represented by homogeneous coordinates $[x_0 : x_1 : \dots : x_n]$, where $x_i \in \mathbb{Q}_p$ and not all $x_i$ are zero. Two sets of homogeneous coordinates represent the same point if they are proportional by a non-zero element of $\mathbb{Q}_p$. For each $i = 0, 1, \dots, n$, we can define an open set $U_i = \{[x_0 : x_1 : \dots : x_n] \in \mathbb{P}^n(\mathbb{Q}_p) : x_i \neq 0 \}$.  Each $U_i$ is isomorphic to $\mathbb{Q}_p^n$.  A chart $\phi_i: U_i \to \mathbb{Q}_p^n$ is given by:
		$$ \phi_i([x_0 : x_1 : \dots : x_n]) = \left( \frac{x_0}{x_i}, \frac{x_1}{x_i}, \dots, \frac{x_{i-1}}{x_i}, \frac{x_{i+1}}{x_i}, \dots, \frac{x_n}{x_i} \right).$$
		Notice that the $i$-th coordinate is omitted. The atlas consists of the charts $\{ (U_i, \phi_i) \}_{i=0}^n$. The transition maps between these charts are rational functions and are therefore analytic on their domains of definition.  For example, consider the transition map from $U_0$ to $U_1$:
		$$ \phi_1 \circ \phi_0^{-1}(y_1, \dots, y_n) = \left( \frac{1}{y_1}, \frac{y_2}{y_1}, \dots, \frac{y_n}{y_1} \right), $$
		which is analytic when $y_1 \neq 0$.  The projective space $\mathbb{P}^n(\mathbb{Q}_p)$ is compact, unlike $\mathbb{Q}_p^n$. This has significant implications for analysis on $\mathbb{P}^n(\mathbb{Q}_p)$.
		\end{example}
	
	\section{Weil Bundles over $p$-adic manifolds}

 \text{Algebraic compatibility with Galois actions}: 
Weil algebras over \( \mathbb{Q}_p \) inherit a natural action of the Galois group \( G_{\mathbb{Q}_p} \). This allows for the study of Galois-equivariant infinitesimal structures, which are critical in arithmetic geometry. For example, the decomposition \( A = \mathbb{Q}_p \oplus \mathfrak{A} \) respects Galois representations, making Weil bundles \( M^A \) a natural framework for studying deformations of Galois-equivariant geometric objects.\\
 \text{Nilpotent infinitesimals}: 
The maximal ideal \( A\) in a Weil algebra consists of nilpotent elements (e.g., \( \varepsilon^2 = 0 \) in \( \mathbb{Q}_p[\varepsilon]/(\varepsilon^2) \)). In the \( p \)-adic setting, where analytic functions are represented by convergent power series, these nilpotents provide a rigorous way to encode higher-order differential information without requiring convergence of infinite series. This avoids complications arising from the non-archimedean topology, where traditional smooth structures are less natural.\\
\text{Comparison with Archimedean case}:
In real differential geometry, smooth functions and manifolds are central, but the total disconnectedness of \( \mathbb{Q}_p \) makes smoothness less useful. Weil algebras circumvent this by focusing on algebraic infinitesimals, which are better adapted to the rigid analytic or formal group structures prevalent in \( p \)-adic geometry. For instance, the dual numbers \( \mathbb{Q}_p[\varepsilon]/(\varepsilon^2) \) parametrize tangent vectors to \( p \)-adic manifolds, generalizing the classical tangent bundle while avoiding reliance on smooth atlases.
	\begin{definition}\cite{Wei}
		A Weil algebra $A$ over $\mathbb{Q}_p$ is a finite-dimensional, commutative, associative algebra with a unit, of the form $A = \mathbb{Q}_p \oplus \mathfrak{A}$, where $\mathfrak{A}$ is a maximal ideal.
	\end{definition}
	
	Let $\{ \alpha_1, \dots, \alpha_l \}$ be a basis for $A$ as a vector space over $\mathbb{Q}_p$, with $\alpha_1 = 1$. The multiplication in $A$ is determined by the multiplication rules for the basis elements:
	$ \alpha_i \alpha_j = \sum_{k=1}^l c_{ijk} \alpha_k, $
	where $c_{ijk} \in \mathbb{Q}_p$ \cite{Wei}.

	\subsection{Infinitely near points}
	
	Let $M$ be a $p$-adic manifold. An infinitely near point of type $A$ to $x \in M$ is a $\mathbb{Q}_p$-algebra homomorphism $\xi: \mathcal{O}(U) \to A$ such that $pr \circ \xi = ev_x$, where $U$ is an open neighbourhood of $x$, $pr: A \to \mathbb{Q}_p$ is the projection, and $ev_x: \mathcal{O}(U) \to \mathbb{Q}_p$ is the evaluation at $x$. $\mathcal{O}(U)$ is the space of analytic functions on $U$.\\
	 We are working with the algebra of analytic functions $\mathcal{O}(U)$ on $M$. This is central for the analyticity results. The homomorphism condition relates the algebra structure of $A$ to the point $x$ in $M$.  It ensures that the "infinitesimal information" captured by the Weil algebra is consistent with the evaluation of functions at the base point $x$. 
	 Let \( M^A_x \) denote the set of all infinitely near points to \( x \in M \) of type \( A \). Each element \( \zeta \in M^A_x \) can be expressed as:
	 \[
	 \zeta(f) = \text{ev}_x(f) + L_\zeta(f),
	 \]
	 where \( \text{ev}_x(f) = f(x) \) is the evaluation of \( f \) at \( x \), and 
	 \( L_\zeta: \mathcal{O}(U) \to \mathfrak{A} \) is a linear map satisfying a \textbf{Leibniz rule} adapted to the analytic setting. The \textbf{Leibniz rule} for analytic functions:
	 \[
	 L_\zeta(fg + \lambda h) = L_\zeta(f) \cdot g(x) + f(x) \cdot L_\zeta(g) + L_\zeta(f) \cdot L_\zeta(g) + \lambda L_\zeta(h),
	 \]
	 for all \( f, g, h \in \mathcal{O}(U) \) and \( \lambda \in \mathbb{Q}_p \).
	 
	\subsection*{The Weil bundle $M^A$}
	
	The Weil bundle $M^A$ is the set of all infinitely near points of type $A$ to points in $M$. Equip $M^A$ with a suitable topology and differentiable (or analytic) structure to make it a $p$-adic manifold. If $(U, \varphi)$ is a chart on $M$ with coordinates $x^1, \dots, x^n$, then a chart on $M^A$ over $U$ is given by the coordinates $x^{i,j}$, where $\xi(x^i) = \sum_{j=1}^l x^{i,j} \alpha_j$. The $x^{i,j}$ are $p$-adic numbers. The natural projection $\pi_M: M^A \to M$ maps an infinitely near point $\xi$ to its base point $x$.\\

	\subsection*{Constructing the analytic structure on $M^A$}
	Let $M$ be a $p$-adic manifold, $A$ a Weil algebra over $\mathbb{Q}_p$, and $M^A$ the corresponding Weil bundle. We address the question of when $M^A$ inherits the structure of a $p$-adic analytic manifold. To show that $M^A$ is a $p$-adic analytic manifold, we need to: Define a suitable topology on $M^A$, and 
		construct an atlas on $M^A$ consisting of charts with analytic transition maps. We assume that $M$ is a $p$-adic analytic manifold. This means that $M$ is covered by an atlas $\{(U_i, \phi_i)\}_{i \in I}$ such that:
	
	\begin{itemize}
		\item Each $U_i$ is an open subset of $M$.
		\item Each $\phi_i: U_i \to V_i$ is a homeomorphism onto an open subset $V_i \subseteq \mathbb{Q}_p^n$.
		\item The transition maps $\phi_j \circ \phi_i^{-1}: \phi_i(U_i \cap U_j) \to \phi_j(U_i \cap U_j)$ are analytic in the $p$-adic sense whenever $U_i \cap U_j \neq \emptyset$.
	\end{itemize}
	Let $A = \mathbb{Q}_p \oplus \mathfrak{A}$ be a Weil algebra over $\mathbb{Q}_p$, where $\mathfrak{A}$ is the maximal ideal. Let $\{\alpha_1, \dots, \alpha_l\}$ be a basis for $A$ as a $\mathbb{Q}_p$-vector space, with $\alpha_1 = 1$.
	\subsubsection*{Local coordinates on $M^A$}
	
	Given a chart $(U_i, \phi_i)$ on $M$ with coordinates $x_1, \dots, x_n$, we construct a corresponding chart on $M^A$ as follows: Let $\xi \in M^A$ be an infinitely near point to $x \in U_i$. Then, for each coordinate function $x_k$, we have $\xi(x_k) \in A$. We can write $\xi(x_k)$ in terms of the basis:
	$ \xi(x_k) = \sum_{j=1}^l x_{k,j}(\xi) \alpha_j, $
	where $x_{k,j}(\phi) \in \mathbb{Q}_p$. Thus, we obtain $nl$ local coordinates on $M^A$ over $U_i$:  $x_{1,1}, \dots, x_{1,l}, x_{2,1}, \dots, x_{2,l}, \dots, x_{n,1}, \dots, x_{n,l}$.
	
	\subsubsection*{The chart on $M^A$}
	
	We define a map $\Phi_i: \pi_M^{-1}(U_i) \to \mathbb{Q}_p^{nl}$ by
	$$ \Phi_i(\xi) = (x_{1,1}(\phi), \dots, x_{1,l}(\xi), x_{2,1}(\phi), \dots, x_{2,l}(\xi), \dots, x_{n,1}(\xi), \dots, x_{n,l}(\xi)).$$
	
	\subsubsection*{Topology on $M^A$}
	
	We define the topology on $M^A$ to be the weakest topology such that: The projection map $\pi_M: M^A \to M$ is continuous.
		and the maps $\Phi_i$ are homeomorphisms onto their images. This ensures that the topology on $M^A$ is compatible with the topology on $M$ and the analytic structure of $\mathbb{Q}_p^{nl}$.
	
	\subsection{Transition maps on $M^A$}
	
	Let $(U_i, \phi_i)$ and $(U_j, \phi_j)$ be two overlapping charts on $M$. We need to show that the transition map $\Phi_j \circ \Phi_i^{-1}$ is analytic. This is the most crucial and technically challenging step.
	\subsubsection*{Analyticity of transition maps on the Weil bundle $M^A$}
	
	Let $(U_i, \phi_i)$ and $(U_j, \phi_j)$ be two overlapping charts on the $p$-adic analytic manifold $M$.  Recall that $\phi_i : U_i \to V_i \subseteq \mathbb{Q}_p^n$ and $\phi_j : U_j \to V_j \subseteq \mathbb{Q}_p^n$ are homeomorphisms onto open subsets of $\mathbb{Q}_p^n$.  We want to demonstrate the analyticity of the transition map
	$$ \Phi_j \circ \Phi_i^{-1}: \Phi_i(\pi_M^{-1}(U_i \cap U_j)) \to \Phi_j(\pi_M^{-1}(U_i \cap U_j)), $$
	where $\Phi_i$ and $\Phi_j$ are the charts on the Weil bundle $M^A$ constructed from $\phi_i$ and $\phi_j$, respectively.\\	The transition map $\Phi_j \circ \Phi_i^{-1}$ can be decomposed into three steps.
	
	\begin{enumerate}
		\item \textbf{Base manifold transition.} Given coordinates $(x_{1,1}, \dots, x_{n,l})$ in $\Phi_i(\pi_M^{-1}(U_i \cap U_j))$, we first determine the coordinates $(y_1, \dots, y_n)$ in $\phi_i(U_i \cap U_j) \subset \mathbb{Q}_p^n$ corresponding to the base point. Since $\alpha_1 = 1$ in our chosen basis for the Weil algebra, this is simply:
		$ y_k = x_{k,1} \quad \text{for } k = 1, \dots, n.$
		\item \textbf{Coordinate transformation on $M$.} Next, we apply the transition map on the base manifold $M$:
		$ (z_1, \dots, z_n) = (\phi_j \circ \phi_i^{-1})(y_1, \dots, y_n),$ 
		where $(z_1, \dots, z_n)$ are the coordinates in $\phi_j(U_i \cap U_j)$.  Because $M$ is a $p$-adic analytic manifold, the functions $\phi_j \circ \phi_i^{-1}$ are locally represented by convergent power series:
		$ z_k = \sum_{\mathbf{m} \in \mathbb{N}^n} c_{k, \mathbf{m}} (y_1 - y_{1,0})^{m_1} \dots (y_n - y_{n,0})^{m_n},$
		for some coefficients $c_{k, \mathbf{m}} \in \mathbb{Q}_p$, where $\mathbf{m} = (m_1, \dots, m_n)$ is a multi-index and $(y_{1,0}, \dots, y_{n,0})$ is a point in the domain of convergence.
		\item \textbf{Lifting to the Weil bundle.} Finally, we need to determine the coordinates $(z_{1,1}, \dots, z_{n,l})$ in $\Phi_j(\pi_M^{-1}(U_i \cap U_j))$. This step involves evaluating the $\mathbb{Q}_p$-algebra homomorphism associated with the point on the Weil bundle.  We apply the convergent power series from step 2, respecting the algebra structure of A: 	Since $\xi\in \text{Hom}(\mathcal{O}(M), A)$ is a homomorphism then,
		$$\xi(z_k) = \xi \left(\sum_{\mathbf{m} \in \mathbb{N}^n} c_{k, \mathbf{m}} (y_1 - y_{1,0})^{m_1} \dots (y_n - y_{n,0})^{m_n} \right) = \sum_{\mathbf{m} \in \mathbb{N}^n} c_{k, \mathbf{m}} \xi(y_1 - y_{1,0})^{m_1} \dots \xi(y_n - y_{n,0})^{m_n},$$
		where $ \xi(y_i)$ is given by the coordinates of the chart $\Phi_i$:
		$  \xi(y_i) = \sum_{j=1}^l x_{i,j} \alpha_j.$
	\end{enumerate}
	The challenge is to show that the final coordinates $(z_{1,1}, \dots, z_{n,l})$ are analytic functions of the initial coordinates $(x_{1,1}, \dots, x_{n,l})$.
	The steps involved are, as established in the previous section, we have a p-adic analytic manifold $M$ with overlapping charts $(U_i, \phi_i)$ and $(U_j, \phi_j)$. The transition map on $M$ is given by
	$$ z_k = (\phi_j \circ \phi_i^{-1})_k(y_1, \dots, y_n) = \sum_{\mathbf{m} \in \mathbb{N}^n} c_{k, \mathbf{m}} (y_1 - y_{1,0})^{m_1} \dots (y_n - y_{n,0})^{m_n}, $$
	where $z_k$ represents the $k$-th coordinate in the chart $(U_j, \phi_j)$, $y_k$ the $k$-th coordinate in the chart $(U_i, \phi_i)$, and $\mathbf{m} = (m_1, \dots, m_n)$ is a multi-index. We also have the lifted transition map $\Phi_j \circ \Phi_i^{-1}$. Our goal is to show the new coordinates of the Weil Bundle in $\Phi_j$ depends analytically on the coordinates of $M$. 
	We know the morphism from the chart $\Phi_i$:
	$ \xi(y_k) = \sum_{j=1}^l x_{k,j} \alpha_j.$ 
	Applying $\xi$ to the transition function of $M$ yields:
	$$ \xi(z_k) =  \xi((\phi_j \circ \phi_i^{-1})_k(y_1, \dots, y_n))= \sum_{\mathbf{m} \in \mathbb{N}^n} c_{k, \mathbf{m}} \xi\left( (y_1 - y_{1,0})^{m_1} \dots (y_n - y_{n,0})^{m_n} \right).$$
	Since the chart is a homomorphism it satisfies:
	$\xi(z_k) = \sum_{\mathbf{m} \in \mathbb{N}^n} c_{k, \mathbf{m}}  \xi(y_1 - y_{1,0})^{m_1} \dots  \xi(y_n - y_{n,0})^{m_n}.$ 
	We can rewrite $ \xi(y_i - y_{i,0})$ as:
	$\xi(y_i - y_{i,0}) =  \sum_{j=1}^l (x_{i,j} - x_{i,0,j})\alpha_j.$
	Where $x_{i,0,j}$ represents the constant coordinate value of a point, in a ball where the power series above converge. Substituting that back into $ \xi(z_k)$:
	\begin{equation}\label{Eq-1}
		\xi(z_k) =  \sum_{\mathbf{m} \in \mathbb{N}^n} c_{k, \mathbf{m}} \prod_{i=1}^n  \left( \sum_{j=1}^l (x_{i,j} - x_{i,0,j})\alpha_j \right)^{m_i}.
	\end{equation}
	
	The goal now is to manipulate the expression of $\xi(z_k)$ so that we can write it as a linear combination of the $\alpha_j$. 
	We want to extract out each coefficient of the basis $\alpha_i$. This is done by expanding the product above in equation (\ref{Eq-1}). Let $A(\mathbf{m},j)$ be the coefficient of $\alpha_j$ in $\xi(z_k)$:
	$$\xi(z_k) =   \sum_{\mathbf{m} \in \mathbb{N}^n} c_{k, \mathbf{m}} \left(  \sum_{j =1}^{l} A(\mathbf{m},j)\right)  \alpha_j = \sum_{j =1}^{l}\left( \sum_{\mathbf{m} \in \mathbb{N}^n} c_{k, \mathbf{m}}  A(\mathbf{m},j)\right)\alpha_j. $$
	The coefficients $A(\mathbf{m},j)$ depends on the coordinates of $\Phi_i$ and they converge.  The critical step is to show the analyticity of the coefficients: Since $M$ is locally analytic, we already know that.
	This concludes our argument, since we have the new local chart coordinates of $M^A$ that depend analytically on the coordinates of $M$. 
	Because we assumed that the transition maps on $M$ were analytic, the expansion converges sufficiently fast, to imply analycity of transition maps of $M^A$.
	\begin{remark}\label{R-4-I}
	\begin{itemize}
		\item This analysis relies heavily on the fact that the functions $\phi_j \circ \phi_i^{-1}$ are given by convergent power series in the $p-$adic metric.  If the transition maps on $M$ were only differentiable but not analytic, then this argument would break down.
		\item 	Since $M$ is analytic, $\phi_j \circ \phi_i^{-1}$ is given by convergent power series. The most likely condition to ensure $M_p^A$ becomes a $p$-adic analytic manifold is that $M$ be locally analytic.
		Assume that $M$ is locally analytic manifold, meaning that the transition functions are locally represented by convergent power series in $\mathbb{Q}_p^n$. Then one can ensure the local analycity of $M^A$ as well.
		\item Finally, if $M$ is a $p$-adic analytic manifold, then by constructing the $p$-adic topology and local charts on the $M^A$, we see that we can ensure that the transition maps on $M^A$ are analytic as well, and therefore that $M^A$ is as well a $p-$adic analytic manifold.
	\end{itemize}
	\end{remark}

	\section{Lifting results from $M$ to $M^A$}\label{Lifting}
	In this section, we present theorems that lift properties from the base manifold $M$ to the Weil bundle $M^A$. We focus on the case where $A = \mathbb{Q}_p[\varepsilon]/(\varepsilon^2)$ is the algebra of dual numbers.

	\begin{theorem}[Lifting Analytic Functions]\label{Lift-AF-1}
	Let $M$ be a $p$-adic analytic manifold, and let $A = \mathbb{Q}_p[\varepsilon]/(\varepsilon^2)$. If $f: U \to \mathbb{Q}_p$ is an analytic function on an open set $U \subseteq M$, then there exists a corresponding analytic function $f^A: \pi^{-1}(U) \to A$ on the preimage of $U$ in $M^A$.
	\end{theorem}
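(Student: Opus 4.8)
\emph{Proof plan.} The plan is to define the lift pointwise by $f^A(\xi):=\xi(f)$ and then verify analyticity in local coordinates. Concretely, for $\xi\in\pi^{-1}(U)$ lying over $x\in U$, the infinitely near point $\xi$ is a $\mathbb{Q}_p$-algebra homomorphism $\mathcal{O}(V)\to A$ defined on some open neighbourhood $V\ni x$; since $f$ is analytic on $U\supseteq V$ we may form $\xi(f|_V)\in A$, and because two such restrictions agree on the intersection of their domains, the value $\xi(f)$ is independent of the choice of $V$. This yields a well-defined map $f^A:\pi^{-1}(U)\to A$ over $U$, and since analyticity is a local condition it suffices to check it over a single chart $(U_i,\phi_i)$ of $M$ contained in $U$.

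Next I would carry out the coordinate computation. Fix coordinates $x^1,\dots,x^n$ on $U_i$ and the induced coordinates $(x^{k,0},x^{k,1})_{k=1}^n$ on $\pi^{-1}(U_i)\subseteq M^A$, so that $\xi(x^k)=x^{k,0}(\xi)+x^{k,1}(\xi)\,\varepsilon$. Writing $f$ near a base point $a=(a_1,\dots,a_n)$ as a convergent power series $f=\sum_{\mathbf m\in\mathbb N^n}c_{\mathbf m}\,(x-a)^{\mathbf m}$ and applying the homomorphism $\xi$ term by term, the relation $\varepsilon^2=0$ collapses each factor to its first-order part:
\begin{equation*}
 \xi(f)\;=\;\sum_{\mathbf m}c_{\mathbf m}\prod_{k=1}^n\bigl(x^{k,0}-a_k+x^{k,1}\varepsilon\bigr)^{m_k}\;=\;f\bigl(x^{1,0},\dots,x^{n,0}\bigr)\;+\;\Bigl(\sum_{k=1}^n\frac{\partial f}{\partial x^k}\bigl(x^{1,0},\dots,x^{n,0}\bigr)\,x^{k,1}\Bigr)\varepsilon .
\end{equation*}
Thus in the basis $\{1,\varepsilon\}$ of $A$ the map $f^A$ has first component $f\circ\pi$ (in coordinates) and second component $\sum_k(\partial_k f)\,x^{k,1}$.

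It then remains to see that both components are $p$-adic analytic on the chart domain. The first is the composition of the analytic function $f$ with the analytic coordinate projection $(x^{k,0},x^{k,1})\mapsto(x^{k,0})$, hence analytic. For the second, one uses that the partial derivatives $\partial_k f$ of a $p$-adic analytic function are again analytic --- termwise differentiation is legitimate because the non-archimedean estimate $|m|_p\le 1$ gives $|m\,c_{\mathbf m}|_p R^{|\mathbf m|}\le|c_{\mathbf m}|_p R^{|\mathbf m|}\to 0$, so the differentiated series converges on the same polydisc --- together with the fact that finite sums and products of analytic functions are analytic and the coordinate functions $x^{k,1}$ are linear, hence analytic. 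Therefore $f^A$ is analytic as an $A$-valued function (equivalently, component-wise with respect to a $\mathbb{Q}_p$-basis of $A$, all norms on the finite-dimensional space $A$ being equivalent). The local descriptions glue, either because $\xi(f)$ was defined intrinsically, or because the transition maps of $M^A$ are analytic by the construction of the preceding section, so that analyticity over each chart implies analyticity on $\pi^{-1}(U)$.

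The main obstacle, such as it is, lies in the two places where the non-archimedean topology must be handled with care: first, justifying that the homomorphism $\xi$ may be applied term by term to the convergent series defining $f$, which follows from the continuity of $\xi$ together with the nilpotency of $\mathfrak A$ (so all contributions beyond the first order vanish identically); and second, the stability of analyticity under differentiation, which --- in sharp contrast to the archimedean case --- is immediate from $|m|_p\le 1$. Once these are in place the argument is a bookkeeping exercise in extracting the coefficient of $\varepsilon$, and the general-$A$ version proceeds identically by expanding $\prod_k\bigl(\sum_j(x^{k,j}-a_{k,j})\alpha_j\bigr)^{m_k}$ in the basis $\{\alpha_1,\dots,\alpha_l\}$, exactly as in Equation~(\ref{Eq-1}).
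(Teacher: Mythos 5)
Your proposal is correct and follows essentially the same route as the paper's own proof: expand $f$ in a local power series, substitute $\xi(x^k)=x^{k,0}+x^{k,1}\varepsilon$, use $\varepsilon^2=0$ to split $f^A$ into $f$ at the base coordinates plus an $\varepsilon$-component built from the partial derivatives, and conclude analyticity from analyticity of $f$ and its derivatives. The extra care you take (intrinsic definition $f^A(\xi)=\xi(f)$, termwise application of $\xi$, the non-archimedean justification that differentiation preserves analyticity, and gluing across charts) only tightens the same argument.
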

	
	\begin{proof}
	Let $(x_1, \dots, x_n)$ be local coordinates on $U$.  Since $f$ is analytic, it has a power series representation:
	$f(x_1, \dots, x_n) = \sum_{\mathbf{m} \in \mathbb{N}^n} a_{\mathbf{m}} (x_1 - c_1)^{m_1} \dots (x_n - c_n)^{m_n}, $
	where $\mathbf{m} = (m_1, \dots, m_n)$ is a multi-index, $a_{\mathbf{m}} \in \mathbb{Q}_p$, and $(c_1, \dots, c_n)$ is the center of the power series. Let $\xi \in \pi^{-1}(U)$ have local coordinates $(x_{1,1}, x_{1,2}, \dots, x_{n,1}, x_{n,2})$, where $\xi(x_i) = x_{i,1} + x_{i,2} \varepsilon$.  Define
	$$ f^A(\xi ) = \sum_{\mathbf{m} \in \mathbb{N}^n} a_{\mathbf{m}} ((x_{1,1} + x_{1,2}\varepsilon) - c_1)^{m_1} \dots ((x_{n,1} + x_{n,2}\varepsilon) - c_n)^{m_n}. $$
	 Using the fact that $\varepsilon^2 = 0$, we can expand and simplify the expression for $f^A(X)$:
	$$ f^A(\xi ) = \sum_{\mathbf{m} \in \mathbb{N}^n} a_{\mathbf{m}} \prod_{i=1}^n \left( (x_{i,1} - c_i) + x_{i,2}\varepsilon \right)^{m_i}  = \sum_{\mathbf{m} \in \mathbb{N}^n} a_{\mathbf{m}} \prod_{i=1}^n \left[ (x_{i,1} - c_i)^{m_i} + m_i (x_{i,1} - c_i)^{m_i - 1} x_{i,2} \varepsilon \right].$$
	
	We can separate this into a part without $\varepsilon$ and a part with $\varepsilon$:
	$$ f^A(\xi ) = f(x_{1,1}, \dots, x_{n,1}) + \varepsilon \sum_{\mathbf{m} \in \mathbb{N}^n} a_{\mathbf{m}} \sum_{k=1}^n m_k x_{k,2} \prod_{i \neq k} (x_{i,1} - c_i)^{m_i} (x_{k,1}-c_k)^{m_k-1}.$$
	
The term without $\varepsilon$ is simply the original analytic function evaluated at the "base point" coordinates $(x_{1,1}, \dots, x_{n,1})$. The term with $\varepsilon$ involves derivatives of $f$ with respect to each variable, which are also analytic. Let us define the partial derivative with respect to $i$ as a operator:
	$$D_i f (x_1,\dots,x_n)=\sum_{\mathbf{m} \in \mathbb{N}^n} a_{\mathbf{m}} m_i  \prod_{k \neq i} (x_{k} - c_k)^{m_k} (x_{i}-c_i)^{m_i-1}.$$
	Then one can write $f^A(\xi)$ as:
	$ f^A(\xi) = f(x_{1,1}, \dots, x_{n,1}) + \varepsilon \sum_{i=1}^n x_{i,2} D_i f (x_1,...,x_n).$ Thus $f^A(\xi)$ is analytic since we have $n$ analytic function $D_i f$ and $f$ itself. And that finishes the proof.

	\end{proof}
	
	\begin{example}
	Let $M = \mathbb{Q}_p$ and $f(x) = x^2$. Then, $f^A(X) = (x_{1,1} + x_{1,2} \varepsilon)^2 = x_{1,1}^2 + 2 x_{1,1} x_{1,2} \varepsilon$.  The derivative of $f$ is $2x$, so we have
	$f^A(X) = f(x_{1,1}) + \varepsilon x_{1,2} D_1 f(x_{1,1})$ and therefore that it is analytic.
	\end{example}

	\begin{theorem}[Lifting of Analytic Functions + Uniqueness]\label{Lift-AF-2}
	Let $M$ be a $p$-adic analytic manifold, and let $A$ be a Weil algebra over $\mathbb{Q}_p$. If $f: U \to \mathbb{Q}_p$ is an analytic function on an open set $U \subseteq M$, then there exists a unique analytic function $\tilde{f}: \pi_M^{-1}(U) \to A$ such that the following diagram commutes:
	\[
	\begin{tikzcd}
		\pi_M^{-1}(U) \arrow[r, "\tilde{f}"] \arrow[d, "\pi_M"] & A \arrow[d, "pr"] \\
		U \arrow[r, "f"] & \mathbb{Q}_p
	\end{tikzcd}
	\]
	where $\pi_M: M^A \to M$ is the projection map from the Weil bundle to the base manifold, and $pr: A \to \mathbb{Q}_p$ is the projection map from the Weil algebra to $\mathbb{Q}_p$.
	\end{theorem}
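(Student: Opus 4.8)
The plan is to define the lift tautologically and then establish analyticity chart by chart. Given $\xi \in \pi_M^{-1}(U)$ lying over a base point $x \in U$, the infinitely near point $\xi$ is by definition a $\mathbb{Q}_p$-algebra homomorphism from the algebra of analytic functions near $x$ into $A$, so I simply set $\tilde f(\xi) := \xi(f)$, where $f$ is first restricted to a small enough neighbourhood of $x$ on which $\xi$ is defined; compatibility under restriction makes this independent of the choice. Commutativity of the square is then immediate: $pr(\tilde f(\xi)) = pr(\xi(f)) = ev_x(f) = f(x) = f(\pi_M(\xi))$, using the defining property $pr\circ\xi = ev_x$ of infinitely near points.

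The substance of the proof is analyticity. Fix a chart $(U_i,\phi_i)$ on $M$ with coordinates $x_1,\dots,x_n$ and the induced chart $\Phi_i$ on $M^A$ with coordinates $x_{k,j}$ ($1\le k\le n$, $1\le j\le l$), where $\xi(x_k) = \sum_{j}x_{k,j}\,\alpha_j$ and $\alpha_1 = 1$. Write $b_k := x_{k,1}$ for the base coordinates and $\eta_k := \sum_{j\ge 2}x_{k,j}\,\alpha_j \in \mathfrak A$ for the nilpotent part, and expand the power series of $f$ about its centre $c = (c_1,\dots,c_n)$; applying the homomorphism $\xi$ termwise gives
\[
\tilde f(\xi) \;=\; \sum_{\mathbf m\in\mathbb N^n} a_{\mathbf m}\prod_{k=1}^n\big((b_k - c_k) + \eta_k\big)^{m_k}.
\]
The termwise application is justified because, $\mathfrak A$ being nilpotent of some index $N$, each factor $\big((b_k - c_k) + \eta_k\big)^{m_k}$ expands by the binomial theorem into finitely many terms in which $\eta_k$ occurs to order $<N$, so the $A$-norm of the $\mathbf m$-th summand is at most $C\,|a_{\mathbf m}|_p\,\rho^{\,|\mathbf m|-n(N-1)}$, with $\rho := \max_k|b_k - c_k|_p$ strictly inside the radius of convergence of $f$, $|\binom{m_k}{t}|_p\le 1$, and $C$ a bound on the finitely many products of structure constants $c_{ijk}$ involved; since $|a_{\mathbf m}|_p\rho^{|\mathbf m|}\to 0$ the series converges in $A$ and equals $\xi(f)$ by continuity. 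Collecting the coefficient of each basis vector $\alpha_j$ expresses $\tilde f$ in the chart $\Phi_i$ as $\tilde f = \sum_j F_j\,\alpha_j$, where each $F_j$ is a convergent power series in the base variables $x_{k,1}$ and a polynomial of degree $<N$ in the remaining $x_{k,j}$ — hence analytic. (For $A = \mathbb{Q}_p[\varepsilon]/(\varepsilon^2)$ this recovers exactly the formula $f^A(\xi) = f(x_{\bullet,1}) + \varepsilon\sum_i x_{i,2}D_i f$ of Theorem~\ref{Lift-AF-1}.)

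For uniqueness, the key point is that an admissible lift is pinned down on the coordinate functions: applied to $f = x_k$ the construction gives $\widetilde{x_k}(\xi) = \xi(x_k)$, which is precisely the $k$-th block of chart coordinates, so any lift compatible with the $\mathbb{Q}_p$-algebra structure (which is what makes $\tilde f$ the canonical lift rather than an arbitrary one) must send $x_k\mapsto\widetilde{x_k}$ and therefore send $f = \sum a_{\mathbf m}\prod(x_k-c_k)^{m_k}$ to $\sum a_{\mathbf m}\prod(\widetilde{x_k}-c_k)^{m_k} = \tilde f$; two such lifts then agree on each $\pi_M^{-1}(U_i)$ and hence on all of $\pi_M^{-1}(U)$. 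The main obstacle I anticipate is the bookkeeping in the analyticity step — controlling the $p$-adic size of the coefficients $F_j$ uniformly enough to conclude convergence on a genuine polydisc in the $x_{k,j}$-coordinates, cleanly separating the base variables $x_{k,1}$ (genuinely infinite series) from the nilpotent variables $x_{k,j}$, $j\ge 2$ (only finitely many monomials by nilpotency). A secondary subtlety worth stating explicitly is that commutativity of the square alone determines $\tilde f$ only modulo $\mathfrak A$-valued analytic functions, so the uniqueness clause really relies on $\tilde f$ being the natural, algebra-homomorphism-valued lift $\xi\mapsto\xi(f)$.
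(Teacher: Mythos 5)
Your construction is essentially the paper's: you define the lift canonically by $\tilde f(\xi)=\xi(f)$, get commutativity immediately from $pr\circ\xi=ev_x$, and prove analyticity by applying $\xi$ termwise to the power series of $f$ in a chart. Two points distinguish your write-up, both to your credit. First, your convergence argument genuinely covers an arbitrary Weil algebra $A$: you split $\xi(x_k)$ into the base coordinate $x_{k,1}$ and the nilpotent part $\eta_k\in\mathfrak A$, use nilpotency of index $N$ to reduce each factor to finitely many binomial terms, and bound the summands by $|a_{\mathbf m}|_p\rho^{|\mathbf m|}$ up to a constant depending on the structure constants; the paper, despite stating the theorem for general $A$, carries out the norm estimate only for the dual numbers $\mathbb{Q}_p[\varepsilon]/(\varepsilon^2)$ with $|a+b\varepsilon|_A=\max(|a|_p,|b|_p)$, so your version is strictly more complete on this step. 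Second, and more importantly, your closing remark about uniqueness identifies a real defect that the paper's proof papers over: commutativity of the square only pins down $\tilde f$ modulo analytic $\mathfrak A$-valued functions (e.g.\ $\tilde f+\varepsilon g$ also makes the diagram commute for any analytic $g$), whereas the paper asserts the difference ``must vanish due to the uniqueness of power series expansions,'' which is not an argument. Your repair --- proving uniqueness only within the class of lifts compatible with the $\mathbb{Q}_p$-algebra structure, where the values on the coordinate functions $x_k$ are forced to be $\xi(x_k)$ and hence the lift of any convergent series is determined --- is the correct reading under which the uniqueness clause is actually true, and you are right to state explicitly that the theorem as literally phrased does not have a unique solution.
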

	
	\begin{proof}
	Let $M$ be a $p$-adic analytic manifold, and let $A$ be a Weil algebra over $\mathbb{Q}_p$. Suppose $f: M \to \mathbb{Q}_p$ is an analytic function. We construct the lifted function $\tilde{f}: M^A \to A$ as follows: Since $f$ is analytic, for any point $x \in M$, there exists a neighborhood $U \subseteq M$ of $x$ such that $f$ can be expressed as a convergent power series in local coordinates:
	$
	f(y) = \sum_{\mathbf{m} \in \mathbb{N}^n} c_{\mathbf{m}} (y_1 - x_1)^{m_1} \cdots (y_n - x_n)^{m_n},
	$
	where $\mathbf{m} = (m_1, \dots, m_n)$ is a multi-index, $c_{\mathbf{m}} \in \mathbb{Q}_p$, and $y = (y_1, \dots, y_n)$ are local coordinates on $U$. 
	For an infinitely near point $\xi \in M^A$ with base point $x = \pi_M(\xi)$, define $\tilde{f}(\xi)$ as the evaluation of the power series for $f$ at $\phi$:
	\[
	\tilde{f}(\xi) = \sum_{\mathbf{m} \in \mathbb{N}^n} c_{\mathbf{m}} \xi\left( (y_1 - x_1)^{m_1} \cdots (y_n - x_n)^{m_n} \right).
	\]
	Since $\xi$ is a $\mathbb{Q}_p$-algebra homomorphism, this expression is well-defined and converges in $A$.  The projection $\text{pr}: A \to \mathbb{Q}_p$ maps $\tilde{f}(\phi)$ to:
	$
	\text{pr}(\tilde{f}(\xi)) = \sum_{\mathbf{m} \in \mathbb{N}^n} c_{\mathbf{m}} \text{pr}\left( \xi\left( (y_1 - x_1)^{m_1} \cdots (y_n - x_n)^{m_n} \right) \right).
	$
	By the definition of $\xi$, we have $\text{pr}(\xi(g)) = g(x)$ for any function $g$. Thus:
	\[
	\text{pr}(\tilde{f}(\xi)) = \sum_{\mathbf{m} \in \mathbb{N}^n} c_{\mathbf{m}} (x_1 - x_1)^{m_1} \cdots (x_n - x_n)^{m_n} = f(x).
	\]
	\item This shows that the diagram commutes:
	$
	\text{pr} \circ \tilde{f} = f \circ \pi_M.
	$ 
	Suppose there exists another analytic function $\tilde{f}': M^A \to A$ such that $\text{pr} \circ \tilde{f}' = f \circ \pi_M$.
	For any $\xi \in M^A$, we have $\text{pr}(\tilde{f}(\xi)) = \text{pr}(\tilde{f}'(\phi)) = f(\pi_M(\xi))$.
	Since $A = \mathbb{Q}_p \oplus \mathfrak{A}$, and $\mathfrak{A}$ is the maximal ideal, the difference $\tilde{f}(\phi) - \tilde{f}'(\xi)$ lies in $\mathfrak{A}$.
	However, $\tilde{f}$ and $\tilde{f}'$ are both analytic, and their difference must vanish due to the uniqueness of power series expansions. Thus, $\tilde{f} = \tilde{f}'$.  To show that $\tilde{f}$ is analytic, we need to show that the power series representation converges. Let is define the norm on A = $\mathbb{Q}_p[\varepsilon]/(\varepsilon^2) $ as  $|a + b\varepsilon|_A = \max(|a|_p, |b|_p)$.  Since $f(x_1, \dots, x_n)$ is analytic, we know that $|a_{\mathbf{m}} (x_1 - c_1)^{m_1} \dots (x_n - c_n)^{m_n}|_p \to 0$ as $|\mathbf{m}| \to \infty$.  Therefore,

	\begin{align*}
		|a_{\mathbf{m}} ((x_{1,1} + x_{1,2}\varepsilon) - c_1)^{m_1} \dots ((x_{n,1} + x_{n,2}\varepsilon) - c_n)^{m_n}|_A
		&= |a_{\mathbf{m}}|_p \max_{i=1}^n |(x_{i,1} + x_{i,2}\varepsilon) - c_i|_A^{m_i} \\
		&= |a_{\mathbf{m}}|_p \max_{i=1}^n  \max( |x_{i,1} - c_i|_p, |x_{i,2}|_p)^{m_i}.
	\end{align*}
	Since  $|x_{i,1} - c_i|_p < R$ for some radius $R$, and we can choose a small enough neighborhood in $M^A$ such that $|x_{i,2}|_p$ is also small, it follows that the terms go to zero as $|\mathbf{m}| \to \infty$, and the power series converges in A.  Therefore, $\tilde{f}$ is analytic. 
	The function $\tilde{f}: M^A \to A$ is uniquely determined by the analytic function $f: M \to \mathbb{Q}_p$, and it satisfies the commutative diagram:
	\[
	\begin{tikzcd}
		M^A \arrow[r, "\tilde{f}"] \arrow[d, "\pi_M"] & A \arrow[d, "\text{pr}"] \\
		M \arrow[r, "f"] & \mathbb{Q}_p
	\end{tikzcd}
	\]
	This completes the proof of the theorem.
	\end{proof}
	
	\begin{theorem}[Lifting Tangent Vectors]\label{thm: vec-lifting-1}
	Let $M$ be a $p$-adic analytic manifold, and $A = \mathbb{Q}_p[\varepsilon]/(\varepsilon^2)$. If $v$ is a tangent vector to $M$ at a point $x$, then there is a corresponding tangent vector $v^A$ to $M^A$ at a point $X \in \pi^{-1}(x)$.
	\end{theorem}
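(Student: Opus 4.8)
The plan is to build the lift explicitly in coordinates --- as the \emph{vertical lift} of $v$ --- and then to show that the local formula is chart-independent, using the transition law for $M^A$ that was computed when its analytic structure was constructed, together with the $\varepsilon$-expansion appearing in the proof of Theorem~\ref{Lift-AF-1}. Fix a chart $(U,\varphi)$ of $M$ about $x$ with coordinates $y_1,\dots,y_n$; the induced chart on $M^A$ has the $2n$ coordinates $(y_{i,1},y_{i,2})_{i=1}^n$, where an infinitely near point $\xi$ lying over a point of $U$ satisfies $\xi(y_i)=y_{i,1}+y_{i,2}\varepsilon$. Write $v=\sum_{i=1}^n a_i\,\partial/\partial y_i|_x$ with $a_i=v(y_i)\in\mathbb{Q}_p$, and let $X\in\pi^{-1}(x)$ be any point of the fibre (for concreteness, the zero-section point $\xi_X\colon f\mapsto f(x)$; the construction will not depend on this choice). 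Then I would simply \emph{define}
\[
v^A:=\sum_{i=1}^n a_i\,\frac{\partial}{\partial y_{i,2}}\Big|_{X}\ \in\ T_X(M^A).
\]
The fibre direction, rather than the base direction, is forced here: the naïve choice $\sum_i a_i\,\partial/\partial y_{i,1}$ fails to be chart-independent away from the zero section, since second derivatives of the transition functions intervene.

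The only step requiring care --- and the one I expect to involve the most bookkeeping --- is the chart-independence of $v^A$. Let $(U',\varphi')$ be a second chart with coordinates $z_1,\dots,z_n$ and transition $z_k=\psi_k(y_1,\dots,y_n)$, analytic because $M$ is. Applying the $\varepsilon$-expansion of the proof of Theorem~\ref{Lift-AF-1} to each component $\psi_k$ (equivalently, reading off the transition-map computation already carried out for $M^A$), the induced transition on $M^A$ is $z_{k,1}=\psi_k(y_{1,1},\dots,y_{n,1})$ and $z_{k,2}=\sum_{i=1}^n y_{i,2}\,(\partial\psi_k/\partial y_i)(y_{1,1},\dots,y_{n,1})$. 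Hence $\partial z_{k,1}/\partial y_{i,2}=0$ while $\partial z_{k,2}/\partial y_{i,2}=(\partial\psi_k/\partial y_i)(y_{\cdot,1})$, so at $X$ one has $\partial/\partial y_{i,2}=\sum_k (\partial\psi_k/\partial y_i)(\varphi(x))\,\partial/\partial z_{k,2}$, and substituting gives $v^A=\sum_k b_k\,\partial/\partial z_{k,2}|_X$ with $b_k=\sum_i a_i\,(\partial\psi_k/\partial y_i)(\varphi(x))$. But $(a_i)\mapsto(b_k)$ is precisely the transformation law for the components of the tangent vector $v$ on $M$ under the same transition; so the local definitions agree on overlaps and $v^A\in T_X(M^A)$ is well defined. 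Since the Jacobian is evaluated only at the base coordinates $\varphi(x)$, the same formula works verbatim for every $X\in\pi^{-1}(x)$, and $v\mapsto v^A$ is $\mathbb{Q}_p$-linear and injective on $T_xM$.

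Finally I would give an intrinsic description that makes the construction manifestly canonical. Because $A=\mathbb{Q}_p\oplus\mathbb{Q}_p\varepsilon$ with $\varepsilon^2=0$, the homomorphism condition shows that an infinitely near point over $x$ is exactly $f\mapsto f(x)+\delta(f)\varepsilon$ for a unique derivation $\delta$ at $x$, so the fibre $\pi^{-1}(x)$ carries a canonical $\mathbb{Q}_p$-vector-space structure identifying it with $T_xM$. Then $t\mapsto X+t\,v$ --- the point whose derivation is $\delta_X+t\,v$ --- is a well-defined analytic curve lying in $\pi^{-1}(x)$ with value $X$ at $t=0$, and $v^A=\frac{d}{dt}\big|_{t=0}(X+t\,v)$ is its velocity; this reproduces the coordinate formula above and identifies $v^A$ with the classical vertical lift under $M^A\cong TM$. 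In summary, the theorem is linear algebra on top of the transition-map analysis already done for $M^A$: no new convergence estimates are needed, the sole genuine content being that the $\varepsilon$-linear part of the transition transforms tensorially, which it does.
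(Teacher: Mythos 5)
Your construction is internally correct: the induced transition law on $M^A$ is indeed $z_{k,1}=\psi_k(y_{1,1},\dots,y_{n,1})$ and $z_{k,2}=\sum_i y_{i,2}\,(\partial\psi_k/\partial y_i)(y_{\cdot,1})$, the fibre coordinates therefore transform tensorially, and your vertical lift $v^A=\sum_i a_i\,\partial/\partial y_{i,2}\big|_X$ is a well-defined, canonical, injective image of $v$ in $T_X(M^A)$; your chart-independence check, and the intrinsic identification of the fibre $\pi^{-1}(x)$ with $T_xM$ via $f\mapsto f(x)+\delta(f)\varepsilon$, are in fact more careful than the paper's own verification. But this is a genuinely different construction from the paper's, and the difference matters for what ``corresponding'' is meant to say. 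The paper defines $v^A$ by letting $v$ act on the base part of a function on $M^A$ (written there, loosely, as $v^A(F)=v(F\circ\pi)$, with $F=f+\varepsilon g$ and $v^A(F)=v(f)$ in coordinates); whatever its notational defects, that vector pushes forward to $v$ under $\pi_{M*}$, which is exactly the compatibility imposed in the companion result ``Lifting of Tangent Vectors + Uniqueness'' ($\pi_{M*}(\tilde v)=v$) and in all the other lifting theorems of Section~\ref{Lifting}. Your vertical lift instead satisfies $\pi_{M*}v^A=0$: it corresponds to $v$ only through the canonical identification of the vertical subspace with $T_xM$, not as a lift lying over $v$. Read in the spirit of the rest of the paper, your proposal therefore proves a different (if cleaner) statement.

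A second, smaller point: your claim that ``the fibre direction is forced'' is too strong if mere existence is the goal. Since $d\pi_X$ is surjective, a tangent vector at $X$ projecting to $v$ always exists---$\sum_i a_i\,\partial/\partial y_{i,1}\big|_X$ in any one chart will do, and existence at a single point does not require the formula to be chart-independent. What your second-derivative observation actually shows is that there is no canonical chart-wise formula for such a lift away from the zero section without additional data (a connection); the vertical lift avoids that non-canonicity precisely at the cost of losing $\pi_{M*}v^A=v$. A complete treatment in the paper's sense would either adopt the paper's pullback definition (made precise) or exhibit any lift over $v$ and, if canonicity is desired, say what extra structure is being used.
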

	
	\begin{proof}
	Recall that a tangent vector $v$ at $x$ can be viewed as a derivation: a linear map $v: \mathcal{O}(U) \to \mathbb{Q}_p$ satisfying the Leibniz rule. Define the lifted tangent vector $v^A: \mathcal{O}(U) \to A$ as follows: For any function $F: \pi^{-1}(U) \to A$:
	$ v^A(F) = v(F \circ \pi).$ Here, the kernel of the map $\pi :M^A \to M$ is zero so this is well defined. We must verify linearity and the Leibniz rule for $v^A$:
	\begin{itemize}
		\item  $v^A(aF + bG) = v((aF + bG) \circ \pi) = v(a(F \circ \pi) + b(G \circ \pi)) = a v(F \circ \pi) + b v(G \circ \pi) = a v^A(F) + b v^A(G)$.
		\item   $v^A(FG) = v((FG) \circ \pi) = v((F \circ \pi)(G \circ \pi)) = v(F \circ \pi) (G \circ \pi)(x) + (F \circ \pi)(x) v(G \circ \pi) = v^A(F) G(X) + F(X) v^A(G)$.
	\end{itemize}
	
	Let $(x_1, \dots, x_n)$ be local coordinates on M, and let $v = \sum_{i=1}^n a_i \frac{\partial}{\partial x_i}$ be the expression of $v$ in these coordinates. Let $F : \pi^{-1}(U) \to A$ be an analytic function on a ball where the power series converges. 
	Express $F$ as : $ F= f + \varepsilon g$ , where $f$ and $g $ are analytic maps on the base manifolds of $U$. Then the function $F \circ \pi=f$ satisfies:
	$v^A(F)= v(f)=\sum_{i=1}^n a_i \frac{\partial f}{\partial x_i}.$ 	So, if we have a function on the base manifold of $M$, then we are able to lift it using an tangent vector.
	
	\end{proof}
	\begin{remark}
	These lifted theorems rely on the properties of analytic functions in the p-adic setting.
	The choice of the Weil algebra $A$ plays a crucial role. These results are specific to the dual numbers.
	The goal is to provide rigorous proofs for key properties.
	\end{remark}
	
	\begin{theorem}[Lifting of Tangent Vectors + Uniqueness]
	Let $M$ be a $p$-adic analytic manifold, and let $A$ be a Weil algebra over $\mathbb{Q}_p$. For any tangent vector $v \in T_x M$ at a point $x \in M$, there exists a unique tangent vector $\tilde{v} \in T_{\phi} M^A$ at $\phi \in M^A$ such that $\pi_{M*}(\tilde{v}) = v$, where $\pi_{M*}: T_{\phi} M^A \to T_x M$ is the differential of the projection map.
	\end{theorem}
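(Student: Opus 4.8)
The plan is to realize $\tilde v$ as the image of $v$ under the differential of the canonical zero-section of $M^A$, and then to read off both the compatibility with $\pi_{M*}$ and the uniqueness from the coordinate description of $M^A$ produced during the construction of its analytic atlas.

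First I would recall that a tangent vector $v\in T_xM$ is a $\mathbb{Q}_p$-linear map $v\colon\mathcal{O}_{M,x}\to\mathbb{Q}_p$ satisfying the Leibniz rule over $\mathrm{ev}_x$, and that $M^A$ carries a canonical \emph{zero-section} $s_0\colon M\to M^A$ sending $x$ to the homomorphism $\xi_0\colon f\mapsto f(x)\cdot 1_A$. Since $pr\circ\xi_0=\mathrm{ev}_x$, each $\xi_0$ is a genuine point of $M^A$ and $\pi_M\circ s_0=\mathrm{id}_M$. I then take $\phi:=s_0(x)$ and set $\tilde v:=(s_0)_*v\in T_\phi M^A$; functoriality of the differential gives immediately $\pi_{M*}(\tilde v)=(\pi_M\circ s_0)_*(v)=v$, which establishes existence. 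In local coordinates $(x_1,\dots,x_n)$ near $x$ with induced coordinates $x_{k,j}$ on $M^A$ (so that $\xi(x_k)=\sum_j x_{k,j}\alpha_j$, with $\alpha_1=1$), the section $s_0$ reads $x_{k,1}=x_k$ and $x_{k,j}=0$ for $j\ge 2$, so for $v=\sum_k a_k\,\partial/\partial x_k$ one obtains the explicit formula $\tilde v=\sum_k a_k\,\partial/\partial x_{k,1}\big|_\phi$.

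To see that this is well defined I would reuse the computation underlying the analyticity of the transition maps of $M^A$: applying the ring homomorphism $pr$ (which kills $\mathfrak{A}$) to $\xi(z_k)$ in the expansion \eqref{Eq-1} shows that the base slot $z_{k,1}=pr(\xi(z_k))$ depends only on the base coordinates $(x_{1,1},\dots,x_{n,1})$ and reproduces the base transition map there, while along the zero-section every $z_{k,j}$ with $j\ge 2$ vanishes identically. Hence at a point $\phi=s_0(x)$ the subspace $H_\phi:=\mathrm{span}_{\mathbb{Q}_p}\{\partial/\partial x_{k,1}|_\phi\}$ is independent of the chart — it coincides with $\mathrm{im}\big((s_0)_*\colon T_xM\to T_\phi M^A\big)$ — and $\tilde v$ transforms across charts exactly by the Jacobian of the base transition map, i.e.\ as a tangent vector on $M$; analyticity of the atlas on $M^A$ makes this identification analytic. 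The Leibniz rule and $\mathbb{Q}_p$-linearity of $\tilde v$ on $\mathcal{O}_{M^A,\phi}$ are then verified exactly as in the proof of Theorem~\ref{thm: vec-lifting-1}.

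For uniqueness, observe that $\pi_{M*}\colon T_\phi M^A\to T_xM$ is surjective with kernel the vertical space $V_\phi=T_\phi(\pi_M^{-1}(x))$, spanned in coordinates by $\{\partial/\partial x_{k,j}:j\ge 2\}$; thus every lift of $v$ has the form $\tilde v+w$ with $w\in V_\phi$, and $\tilde v$ is singled out as the unique lift lying in the canonical horizontal complement $H_\phi=\mathrm{im}((s_0)_*)$, on which $\pi_{M*}$ restricts to a linear isomorphism onto $T_xM$. I expect the main obstacle to be precisely this point: a bare submersion admits no canonical horizontal splitting, so the real content of the statement is that the normalization $\alpha_1=1$ together with the analytic structure on $M^A$ furnishes one along the zero-section. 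For a general point $\phi\in\pi_M^{-1}(x)$ one must additionally fix a direction inside the fibre; this is transparent in the dual-number case $A=\mathbb{Q}_p[\varepsilon]/(\varepsilon^2)$ of Section~\ref{Lifting}, where $\pi_M^{-1}(x)\cong T_xM$ is a $\mathbb{Q}_p$-vector space whose fibrewise translations identify all the vertical spaces $V_\phi$ canonically.
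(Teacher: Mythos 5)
Your existence argument is correct but follows a genuinely different route from the paper. The paper does not use the zero-section: it identifies $T_\xi M^A$ with derivations $C^\infty(M)\to A$ commuting with the projection $\mathrm{pr}:A\to\mathbb{Q}_p$, defines the lift directly by $\tilde v(f)=\xi(v(f))$ (i.e.\ it pushes the $\mathbb{Q}_p$-valued derivation $v$ into $A$ through the infinitely near point $\xi$, so the construction is given at an arbitrary $\xi\in\pi_M^{-1}(x)$, not only along a section), verifies the Leibniz rule, and checks $\mathrm{pr}\circ\tilde v=v$. Your pushforward $(s_0)_*v$ along the canonical section $f\mapsto f(x)\cdot 1_A$ produces the same horizontal vector at $\phi=s_0(x)$, with the advantage that $\pi_{M*}(\tilde v)=v$ is immediate from functoriality rather than from an unpacking of $\mathrm{pr}$; the cost is that, as you note, at a general point of the fibre you must still say which lift you mean, which the paper's formula does automatically.

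On uniqueness you have put your finger on the real issue rather than resolved it, but the paper does no better. Whenever $\mathfrak{A}\neq 0$ the kernel of $\pi_{M*}$ is the nonzero vertical space (in the dual-number case, adding $\varepsilon w$ for any derivation $w$ over $\mathrm{ev}_x$ gives another derivation into $A$ with the same projection), so the statement as literally written cannot hold without a normalization such as membership in your horizontal subspace $H_\phi=\mathrm{im}((s_0)_*)$, or singling out the canonical prolongation. The paper's argument --- the difference of two lifts takes values in $\mathfrak{A}$, and ``must vanish due to the uniqueness of the lift'' --- is circular and does not exclude exactly the vertical perturbations you describe. So your proposal matches the paper on existence (by a different construction) and is more honest on uniqueness: to make the theorem true one must either restrict the class of admissible lifts or reformulate it as the existence of a canonical splitting of $\pi_{M*}$ along the zero-section, which is precisely what your $H_\phi$ provides.
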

	\begin{proof}
	Let $M$ be a $p$-adic analytic manifold, and let $A$ be a Weil algebra over $\mathbb{Q}_p$. Let $v \in T_x M$ be a tangent vector at a point $x \in M$. We construct the lifted tangent vector $\tilde{v} \in T_{\phi} M^A$ at $\xi \in M^A$ as follows:
	The tangent vector $v$ corresponds to a derivation $v: C^\infty(M) \to \mathbb{Q}_p$, where $C^\infty(M)$ is the algebra of analytic functions on $M$.
	For any analytic function $f \in C^\infty(M)$, the action of $v$ on $f$ is given by:
	$
	v(f) = \sum_{i=1}^n a_i \frac{\partial f}{\partial x_i}(x),
	$
	where $(x_1, \dots, x_n)$ are local coordinates near $x$, and $a_i \in \mathbb{Q}_p$. 
	For an infinitely near point $\xi \in M^A$ with base point $x = \pi_M(\xi)$, define $\tilde{v}: C^\infty(M) \to A$ by:
	$
	\tilde{v}(f) = \xi(v(f)).
	$
	Since $\xi$ is a $\mathbb{Q}_p$-algebra homomorphism, $\tilde{v}$ is well-defined and satisfies the Leibniz rule:
	\[
	\tilde{v}(fg) = \xi(v(fg)) = \xi(f(x) v(g) + g(x) v(f)) = \xi(f(x)) \xi(v(g)) + \xi(g(x)) \xi(v(f)).
	\]
	Thus, $\tilde{v}$ is a derivation on $C^\infty(M)$ with values in $A$. The differential $\pi_{M*}: T_{\xi} M^A \to T_x M$ maps $\tilde{v}$ to $v$ as follows:
	$
	\pi_{M*}(\tilde{v})(f) = \text{pr}(\tilde{v}(f)) = \text{pr}(\xi(v(f))) = v(f),
	$
	where $\text{pr}: A \to \mathbb{Q}_p$ is the projection map.
	This shows that $\pi_{M*}(\tilde{v}) = v$. 
	Suppose there exists another tangent vector $\tilde{v}' \in T_{\xi} M^A$ such that $\pi_{M*}(\tilde{v}') = v$.
	For any $f \in C^\infty(M)$, we have:
	$
	\text{pr}(\tilde{v}(f)) = \text{pr}(\tilde{v}'(f)) = v(f).
	$
	Since $A = \mathbb{Q}_p \oplus \mathfrak{A}$, and $\mathfrak{A}$ is the maximal ideal, the difference $\tilde{v}(f) - \tilde{v}'(f)$ lies in $\mathfrak{A}$.
	However, $\tilde{v}$ and $\tilde{v}'$ are both derivations, and their difference must vanish due to the uniqueness of the lift. Thus, $\tilde{v} = \tilde{v}'$. 
	The lifted tangent vector $\tilde{v}$ is an element of $T_{\xi} M^A$, the tangent space to $M^A$ at $\xi$.
	The tangent space $T_{\xi} M^A$ is naturally identified with derivations $C^\infty(M) \to A$ that commute with the projection $\text{pr}: A \to \mathbb{Q}_p$. 
	The tangent vector $\tilde{v} \in T_{\xi} M^A$ is uniquely determined by the tangent vector $v \in T_x M$, and it satisfies:
	$
	\pi_{M*}(\tilde{v}) = v.
	$
	This completes the proof of the theorem.
	\end{proof}
	\begin{theorem}[Lifting of Differential $k$-Forms]\label{thm:lifting-forms}
	Let $M$ be a $p$-adic analytic manifold, and let $A$ be a Weil algebra over $\mathbb{Q}_p$. If $\omega$ is a differential $k$-form on $M$, then there exists a unique differential $k$-form $\tilde{\omega}$ on $M^A$ such that $$\tilde{\omega}|_\xi(v_1, \dots, v_k) = \omega|_{\pi_M(\xi)}(d\pi_M(v_1), \dots, d\pi_M(v_k)),$$ for all $\xi \in M^A$ and all tangent vectors $v_1, \dots, v_k \in T_\xi(M^A)$, where $\pi_M: M^A \to M$ is the projection map, $T_\xi(M^A)$ is the tangent space of $M^A$ at $\xi$, and $d\pi_M$ is the differential of $\pi_M$.
	\end{theorem}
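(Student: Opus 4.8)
The key observation is that the prescribed form is nothing but the pullback $\pi_M^{*}\omega$ along the canonical projection $\pi_M\colon M^{A}\to M$. Once this is recognized, the defining identity $\tilde\omega|_{\xi}(v_1,\dots,v_k)=\omega|_{\pi_M(\xi)}(d\pi_M(v_1),\dots,d\pi_M(v_k))$ holds by the very definition of the pullback of a differential form, and the whole content of the statement reduces to two points: (i) that $\pi_M^{*}\omega$ is a well-defined analytic $k$-form on $M^{A}$, and (ii) uniqueness. I would dispose of uniqueness first, since it is immediate: the stated identity prescribes the value of $\tilde\omega|_{\xi}(v_1,\dots,v_k)$ for \emph{every} $\xi\in M^{A}$ and \emph{every} $k$-tuple $v_1,\dots,v_k\in T_{\xi}(M^{A})$, hence determines the alternating $k$-linear form $\tilde\omega|_{\xi}\in\Lambda^{k}(T_{\xi}M^{A})^{*}$ pointwise and therefore determines $\tilde\omega$ as a section. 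That the right-hand side actually \emph{is} alternating and $\mathbb{Q}_p$-multilinear in $(v_1,\dots,v_k)$ follows because $d\pi_M$ is linear and $\omega$ is alternating multilinear, so the prescription genuinely defines an element of $\Lambda^{k}(T_{\xi}M^{A})^{*}$ at each point.

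For existence together with analyticity I would pass to the charts $\Phi_i$ on $M^{A}$ constructed in Section~\ref{S1-3} from charts $(U_i,\phi_i)$ on $M$ with coordinates $x^{1},\dots,x^{n}$. In these coordinates $\pi_M$ is the coordinate projection sending $(x^{k,1},\dots,x^{k,l})_{k}$ to $(x^{1,1},\dots,x^{n,1})$, i.e. to the $\alpha_1$-component of each coordinate (recall $\alpha_1=1$); in particular $\pi_M$ is analytic and $\pi_M^{*}(dx^{k})=dx^{k,1}$. Writing $\omega=\sum_{I}f_I\,dx^{i_1}\wedge\cdots\wedge dx^{i_k}$ over increasing multi-indices $I=(i_1<\cdots<i_k)$ with each $f_I$ analytic on $U_i$, we get
\[
\tilde\omega|_{\pi_M^{-1}(U_i)}=\sum_{I}(f_I\circ\pi_M)\,dx^{i_1,1}\wedge\cdots\wedge dx^{i_k,1}.
\]
Each coefficient $f_I\circ\pi_M$ is analytic: in the chart it is $f_I$ evaluated at the base-point coordinates $(x^{1,1},\dots,x^{n,1})$, equivalently $f_I\circ\pi_M=\mathrm{pr}\circ\widetilde{f_I}$ with $\widetilde{f_I}$ the analytic lift furnished by Theorem~\ref{Lift-AF-2}, so the displayed local expression is an analytic $k$-form on $\pi_M^{-1}(U_i)$. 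Because $\pi_M^{*}\omega$ is defined intrinsically, these local expressions automatically agree on overlaps $\pi_M^{-1}(U_i\cap U_j)$; alternatively one checks directly that they transform correctly, using that the transition maps of $M^{A}$ are analytic (Section~\ref{S1-3}) and the functoriality $(\Phi_j\circ\Phi_i^{-1})^{*}$ of pullback. Unwinding $\pi_M^{*}\omega$ at a point then yields exactly the claimed formula, which finishes existence.

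I expect the only mildly delicate point to be the bookkeeping of analyticity of the pulled-back coefficients across charts, namely confirming that expanding a base-manifold power series and re-expanding it after the $A$-valued coordinate substitutions preserves $p$-adic convergence. However this is precisely the estimate already carried out both in the construction of the analytic structure on $M^{A}$ in Section~\ref{S1-3} and in the proof of Theorem~\ref{Lift-AF-2}, so no new analytic input is required; the conceptual content is simply that $\tilde\omega:=\pi_M^{*}\omega$ works and is the unique choice. One could optionally close by noting $d$ commutes with $\pi_M^{*}$, so the assignment $\omega\mapsto\tilde\omega$ is compatible with the exterior derivative, which is what is needed for the cohomological comparison in Section~\ref{Cohomology}.
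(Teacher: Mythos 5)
Your proposal is correct for the statement as written, but it takes a genuinely different route from the paper. You observe that the displayed pointwise identity is literally the definition of the pullback, so you set $\tilde\omega:=\pi_M^{*}\omega$, get uniqueness for free (the identity prescribes $\tilde\omega|_{\xi}$ on all of $\Lambda^{k}T_{\xi}M^{A}$), and reduce existence to the analyticity of $\pi_M$ in the charts $\Phi_i$, where it is just projection onto the $\alpha_1$-components; the coefficients of your form are $f_I\circ\pi_M$, i.e.\ fiberwise constant. The paper instead builds $\tilde\omega$ as an analytic prolongation: it lifts each coefficient $f_I$ to the $A$-valued function $\tilde f_I$ via Theorem~\ref{Lift-AF-2} and each coordinate to $\tilde x_i$, setting $\tilde\omega=\sum_I\tilde f_I\,d\tilde x^{I}$, and then argues commutation with $\pi_M$ and uniqueness through the local expressions. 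These are not the same object: the paper's form carries the extra $\mathfrak{A}$-components (derivative/deformation data in the $\varepsilon$-direction), which is what feeds into the cohomological discussion later, whereas your form is the $\mathbb{Q}_p$-valued pullback. What your approach buys is a proof that matches the stated characterization exactly and makes uniqueness trivial; note that the paper's own verification is phrased via a condition of the shape $\pi_M^{*}\tilde\omega=\omega$ (a projection of the lifted data back to $M$, written with the arrow in the opposite direction of the usual pullback), which is a different, weaker normalization than the displayed identity and would not single out a unique form without further conditions. If one wants the prolongation rather than the pullback, the theorem's characterizing identity would have to be reformulated (e.g.\ $A$-linearly in terms of the Weil-algebra structure); as the statement stands, your argument is the one that proves it, and your closing remark that $d$ commutes with $\pi_M^{*}$ is a useful addition for Section~\ref{Cohomology}.
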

	\begin{proof}
	Let $M$ be a $p$-adic analytic manifold, and let $A$ be a Weil algebra over $\mathbb{Q}_p$. Let $\omega$ be a differential $k$-form on $M$. We construct the lifted differential form $\tilde{\omega}$ on $M^A$ as follows:
	 In local coordinates $(x_1, \dots, x_n)$ near a point $x \in M$, the differential form $\omega$ can be expressed as:
		$
		\omega = \sum_{I} f_I \, dx^I,
		$
		where $I = (i_1, \dots, i_k)$ is a multi-index, $f_I$ are analytic functions on $M$, and $dx^I = dx_{i_1} \wedge \dots \wedge dx_{i_k}$.

	 For an infinitely near point $\xi \in M^A$ with base point $x = \pi_M(\xi)$, define $\tilde{\omega}$ locally as:
	$
	\tilde{\omega} = \sum_{I} \tilde{f}_I \, d\tilde{x}^I,
	$
	where:
	\begin{itemize}
		\item $\tilde{f}_I$ is the lift of $f_I$ to $M^A$ (as constructed in the \text{Lifting of Analytic Functions} theorem).
		\item $d\tilde{x}^I = d\tilde{x}_{i_1} \wedge \dots \wedge d\tilde{x}_{i_k}$, and $\tilde{x}_i$ are the lifted coordinates on $M^A$.
	\end{itemize}
	
	The pullback $\pi_M^*: \Omega^k(M^A) \to \Omega^k(M)$ maps $\tilde{\omega}$ to $\omega$ as follows:
	$
	\pi_M^* \tilde{\omega} = \sum_{I} (\pi_M^* \tilde{f}_I) \, d(\pi_M^* \tilde{x}^I).
	$
	By construction, $\pi_M^* \tilde{f}_I = f_I$ and $\pi_M^* \tilde{x}_i = x_i$, so:
	$
	\pi_M^* \tilde{\omega} = \sum_{I} f_I \, dx^I = \omega.
	$ Suppose there exists another differential form $\tilde{\omega}'$ on $M^A$ such that $\pi_M^* \tilde{\omega}' = \omega$.
	For any point $\xi \in M^A$, the local expressions for $\tilde{\omega}$ and $\tilde{\omega}'$ must agree because $\pi_M^* \tilde{\omega} = \pi_M^* \tilde{\omega}' = \omega$.
	Since $\tilde{\omega}$ and $\tilde{\omega}'$ are determined locally by their pullbacks, we have $\tilde{\omega} = \tilde{\omega}'$.  The lifted form $\tilde{\omega}$ is analytic because it is defined locally by analytic functions $\tilde{f}_I$ and analytic coordinate differentials $d\tilde{x}^I$. The differential form $\tilde{\omega}$ on $M^A$ is uniquely determined by the differential form $\omega$ on $M$, and it satisfies:
	$
	\pi_M^* \tilde{\omega} = \omega.
	$
	This completes the proof of the theorem.
	\end{proof}

	\begin{theorem}[Lifting of Vector Fields]\label{thm: vec-lifting-2}
	Let $M$ be a $p$-adic analytic manifold, and let $A$ be a Weil algebra over $\mathbb{Q}_p$. For any vector field $X$ on $M$, there exists a unique vector field $\tilde{X}$ on $M^A$ such that $\pi_{M*}(\tilde{X}) = X$, where $\pi_{M*}: TM^A \to TM$ is the differential of the projection map.
	\end{theorem}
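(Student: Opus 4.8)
The plan is to build $\tilde X$ first inside a coordinate chart, to recognize it intrinsically as the infinitesimal generator of the prolonged flow, then to glue, and finally to settle uniqueness. Recall that a vector field $X$ on $M$ is a derivation of $\mathcal{O}(M)$, and in a chart $(U_i,\phi_i)$ with coordinates $x_1,\dots,x_n$ it reads $X = \sum_{i=1}^n a_i\,\partial/\partial x_i$ with $a_i \in \mathcal{O}(U_i)$. On $M^A$ the induced chart carries the $nl$ fiber coordinates $x_{i,j}$ determined by $\xi(x_i) = \sum_{j=1}^l x_{i,j}(\xi)\,\alpha_j$, and we seek a derivation $\tilde X$ of $\mathcal{O}(M^A)$ — i.e.\ an analytic vector field on $M^A$ — with $\pi_{M*}\tilde X = X$.

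First I would lift the coefficients. By Theorem~\ref{Lift-AF-2}, each $a_i$ has a unique analytic lift $\tilde a_i\colon \pi_M^{-1}(U_i)\to A$ with $\mathrm{pr}\circ\tilde a_i = a_i\circ\pi_M$; write $\tilde a_i = \sum_{j=1}^l a_{i,j}\,\alpha_j$ with $a_{i,j}\in\mathcal{O}(\pi_M^{-1}(U_i))$ analytic. I then set, in the chart,
$$\tilde X \;=\; \sum_{i=1}^n\sum_{j=1}^l a_{i,j}\,\frac{\partial}{\partial x_{i,j}}.$$
This is a finite $\mathcal{O}(\pi_M^{-1}(U_i))$-combination of the coordinate derivations, hence an analytic vector field. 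Conceptually, $\tilde X$ is precisely the generator of the prolongation $(\varphi^X_t)^A$ of the local flow $\varphi^X_t$ of $X$ under the Weil functor $(-)^A$; equivalently, it is the unique derivation of $\mathcal{O}(M^A)$ satisfying $\tilde X(f^A) = (Xf)^A$ for every $f\in\mathcal{O}(M)$ (applying $\tilde X$ to $A$-valued functions componentwise), a normalization which on the generators $x_{i,j} = (x_i)^A_j$ reduces exactly to $\tilde X(x_{i,j}) = a_{i,j}$.

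Next I would check projectability and patch. Since $\alpha_1 = 1$, the projection $\pi_M$ reads $(x_{i,j})\mapsto(x_{i,1})$ in these charts, so $\pi_{M*}\tilde X$ acts on $x_i$ by $\tilde X(x_{i,1}) = a_{i,1}$; but the commuting square of Theorem~\ref{Lift-AF-2} shows that the component $a_{i,1}$ equals $a_i\circ\pi_M = (Xx_i)\circ\pi_M$, whence $\pi_{M*}\tilde X = X$. For gluing, note that the $\tilde a_i$ come from the \emph{unique} global lift of Theorem~\ref{Lift-AF-2} and that the transition maps of $M^A$ are analytic (see Remark~\ref{R-4-I}); hence the local expressions for $\tilde X$ agree on the overlaps $\pi_M^{-1}(U_i\cap U_j)$ and assemble into a single analytic vector field $\tilde X$ on $M^A$ with $\pi_{M*}\tilde X = X$.

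Uniqueness is the step I expect to be the genuine obstacle, because the bare condition $\pi_{M*}\tilde X = X$ only pins down the "horizontal" part of $\tilde X$: if $\tilde X'$ is another lift, then $D := \tilde X - \tilde X'$ is $\pi_M$-vertical, and vertical vector fields need not vanish. The statement must therefore be read with the canonical lift — the one compatible with $(-)^A$ — in mind, and the task is to show that this compatibility is forced. Imposing $\tilde X(f^A) = (Xf)^A = \tilde X'(f^A)$ for all $f\in\mathcal{O}(M)$ yields $D(f^A) = 0$; in particular $D(x_{i,j}) = D\big((x_i)^A_j\big) = 0$ for all $i,j$, and since the $x_{i,j}$ generate $\mathcal{O}(\pi_M^{-1}(U_i))$ as an analytic algebra, $D = 0$ on each chart, hence globally. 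The vanishing $D(f^A)=0$ itself follows, exactly as in the proof of the tangent-vector lifting theorem, from the fact that $\tilde X(f^A)-\tilde X'(f^A)$ is an $A$-valued analytic function whose nonconstant obstructions lie in $\mathfrak{A}$ and which vanishes by the uniqueness of power-series expansions. Thus the only delicate point is recognizing that "uniqueness" means uniqueness of the lift subordinate to the Weil-functor normalization $\tilde X(f^A)=(Xf)^A$, after which everything reduces to Theorem~\ref{Lift-AF-2} together with the analyticity of the charts on $M^A$ established earlier.
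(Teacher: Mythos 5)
Your construction is essentially the paper's: the paper also writes $X=\sum_i a_i\,\partial/\partial x_i$ in a chart, lifts the coefficients $a_i$ to $\tilde a_i$ via the function-lifting theorem, sets $\tilde X=\sum_i \tilde a_i\,\partial/\partial\tilde x_i$ in the induced coordinates, checks $\pi_{M*}\tilde a_i=a_i$ to get $\pi_{M*}\tilde X=X$, and glues using analyticity of the transition maps of $M^A$ — your formula $\tilde X=\sum_{i,j}a_{i,j}\,\partial/\partial x_{i,j}$ is just the more explicit, component-wise version of this, and your identification of $\tilde X$ with the complete lift satisfying $\tilde X(f^A)=(Xf)^A$ is an added (and useful) intrinsic characterization that the paper does not state. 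Where you genuinely diverge is uniqueness, and there your diagnosis is the better one: the paper argues that if $\pi_{M*}(\tilde X')=X$ then ``the local expressions for $\tilde X$ and $\tilde X'$ must agree because their projections agree,'' which is not forced — as you observe, adding any $\pi_M$-vertical field (e.g.\ $\partial/\partial x_{i,j}$ with $j\geq 2$) leaves the projection unchanged, so the bare condition $\pi_{M*}\tilde X=X$ cannot single out $\tilde X$. Your repair — reading uniqueness relative to the normalization $\tilde X(f^A)=(Xf)^A$, and then deducing $D(x_{i,j})=0$ for the difference $D$ because the $x_{i,j}$ generate the local analytic algebra — is the correct way to make the theorem's uniqueness claim true, whereas the paper's uniqueness step, taken literally, has exactly the gap you identify. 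So: same existence argument, but your treatment of uniqueness is a substantive correction rather than a mere variant.
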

	\begin{proof}
	Let $M$ be a $p$-adic analytic manifold, and let $A$ be a Weil algebra over $\mathbb{Q}_p$. Let $X$ be a vector field on $M$. We construct the lifted vector field $\tilde{X}$ on $M^A$ as follows: 
	In local coordinates $(x_1, \dots, x_n)$ near a point $x \in M$, the vector field $X$ can be expressed as:
	$
	X = \sum_{i=1}^n a_i \frac{\partial}{\partial x_i},
	$
	where $a_i$ are analytic functions on $M$. 
	For an infinitely near point $\phi \in M^A$ with base point $x = \pi_M(\phi)$, define $\tilde{X}$ locally as:
	$
	\tilde{X} = \sum_{i=1}^n \tilde{a}_i \frac{\partial}{\partial \tilde{x}_i},
	$
	where:
	\begin{itemize}
		\item $\tilde{a}_i$ is the lift of $a_i$ to $M^A$ (as constructed in the \text{Lifting of Analytic Functions} theorem).
		\item $\tilde{x}_i$ are the lifted coordinates on $M^A$.
	\end{itemize}

	The differential $\pi_{M*}: TM^A \to TM$ maps $\tilde{X}$ to $X$ as follows:
	$
	\pi_{M*}(\tilde{X}) = \sum_{i=1}^n (\pi_{M*} \tilde{a}_i) \frac{\partial}{\partial x_i}.
	$
	By construction, $\pi_{M*} \tilde{a}_i = a_i$, so:
	$
	\pi_{M*}(\tilde{X}) = \sum_{i=1}^n a_i \frac{\partial}{\partial x_i} = X.
	$
	Suppose there exists another vector field $\tilde{X}'$ on $M^A$ such that $\pi_{M*}(\tilde{X}') = X$. For any point $\phi \in M^A$, the local expressions for $\tilde{X}$ and $\tilde{X}'$ must agree because $\pi_{M*}(\tilde{X}) = \pi_{M*}(\tilde{X}') = X$.  Since $\tilde{X}$ and $\tilde{X}'$ are determined locally by their projections, we have $\tilde{X} = \tilde{X}'$. 
	The lifted vector field $\tilde{X}$ is analytic because it is defined locally by analytic functions $\tilde{a}_i$ and analytic coordinate vector fields $\frac{\partial}{\partial \tilde{x}_i}$. The vector field $\tilde{X}$ on $M^A$ is uniquely determined by the vector field $X$ on $M$, and it satisfies:
	$
	\pi_{M*}(\tilde{X}) = X.
	$
	This completes the proof of the theorem.
	\end{proof}

	\begin{theorem}[Lifting of group actions]
	Let $M$ be a $p$-adic analytic manifold, and let $A$ be a Weil algebra over $\mathbb{Q}_p$. Suppose a Lie group $G$ acts analytically on $M$. Then the action of $G$ can be lifted to an action on $M^A$ such that the projection $\pi_M: M^A \to M$ is $G$-equivariant.
	\end{theorem}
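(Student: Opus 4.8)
The plan is to obtain the lifted action from the \emph{functoriality} of the Weil-bundle construction $M\mapsto M^A$ together with the fact that $(-)^A$ preserves finite products; the group axioms then become formal consequences of the corresponding axioms on $M$. First I would record the basic functoriality statement: if $\varphi\colon M\to N$ is an analytic map of $p$-adic analytic manifolds, the assignment $\varphi^A(\xi)(f)=\xi(f\circ\varphi)$ defines a map $\varphi^A\colon M^A\to N^A$ with $\pi_N\circ\varphi^A=\varphi\circ\pi_M$, and $\varphi^A$ is analytic. The analyticity is proved exactly as in Theorem~\ref{Lift-AF-2} and in the transition-map computation of Section~\ref{S1-3}: in local coordinates $\varphi$ is a convergent power series, one substitutes $\xi(x_k)=\sum_j x_{k,j}\alpha_j$, expands using the structure constants $c_{ijk}$ of $A$, and collects the coefficient of each $\alpha_j$, whose convergence follows from that of the series for $\varphi$ as in equation~\eqref{Eq-1}. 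One checks at once that $(\mathrm{id}_M)^A=\mathrm{id}_{M^A}$ and $(\psi\circ\varphi)^A=\psi^A\circ\varphi^A$, so $(-)^A$ is a functor on $p$-adic analytic manifolds equipped with the natural transformation $\pi\colon(-)^A\Rightarrow\mathrm{id}$.

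Next, for each $g\in G$ the translation $\lambda_g\colon M\to M$, $x\mapsto g\cdot x$, is an analytic diffeomorphism, so $\lambda_g^A\colon M^A\to M^A$ is one as well, and functoriality gives $\lambda_g^A\circ\lambda_h^A=\lambda_{gh}^A$, $\lambda_e^A=\mathrm{id}$, together with $\pi_M\circ\lambda_g^A=\lambda_g\circ\pi_M$. This already exhibits a $G$-action on $M^A$ by analytic automorphisms making $\pi_M$ equivariant. To upgrade ``separately analytic in $g$'' to a genuinely analytic action map $\tilde\alpha\colon G\times M^A\to M^A$, I would use that $(-)^A$ sends products to products: the two projections $G\times M\to G$ and $G\times M\to M$ induce a natural analytic isomorphism $(G\times M)^A\cong G^A\times M^A$, which over a product chart is the statement that an $A$-point of $U\times V$ is the same as a pair of $A$-points, a coordinate bookkeeping identical to the chart construction of Section~\ref{S1-3}. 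There is also a canonical analytic section $s_M\colon M\to M^A$, $s_M(x)(f)=f(x)$, coming from the unit $\mathbb{Q}_p\hookrightarrow A$; applied to $G$ it gives $s_G\colon G\to G^A$, which by naturality of $s$ is a homomorphism for the group structure $m^A\colon G^A\times G^A\cong(G\times G)^A\to G^A$ that functoriality puts on $G^A$. Writing $\alpha\colon G\times M\to M$ for the given action, define
\[
\tilde\alpha\colon\ G\times M^A \xrightarrow{\ s_G\times\mathrm{id}\ } G^A\times M^A \cong (G\times M)^A \xrightarrow{\ \alpha^A\ } M^A,
\]
which is analytic as a composite of analytic maps, and whose restriction to $\{g\}\times M^A$ is $\lambda_g^A$ (since $s_G(g)$ is the evaluation-at-$g$ near-point and $\alpha(g,-)=\lambda_g$).

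Finally I would verify the action axioms for $\tilde\alpha$. The identity axiom $\tilde\alpha(e,-)=\mathrm{id}$ follows from $\alpha(e,-)=\mathrm{id}_M$ and functoriality. Associativity $\tilde\alpha(g,\tilde\alpha(h,\xi))=\tilde\alpha(gh,\xi)$ is obtained by applying $(-)^A$ to the associativity square $\alpha\circ(\mathrm{id}_G\times\alpha)=\alpha\circ(m\times\mathrm{id}_M)$, using the product-compatibility isomorphism to identify $(G\times G\times M)^A$ with $G^A\times G^A\times M^A$, precomposing with $s_G\times s_G\times\mathrm{id}$, and invoking that $s_G$ is a group homomorphism. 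Equivariance of $\pi_M$ is the naturality identity $\pi_M\circ\alpha^A=\alpha\circ(\pi_G\times\pi_M)$ combined with $\pi_G\circ s_G=\mathrm{id}_G$.

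The main obstacle is not the diagram-chasing but the two analytic-category inputs underlying it: (i) that $\varphi\mapsto\varphi^A$ genuinely lands among \emph{analytic} maps — the content already secured for functions in Theorem~\ref{Lift-AF-2} and for coordinate changes in Section~\ref{S1-3}, now to be applied to the possibly higher-dimensional analytic map $\alpha$; and (ii) the natural isomorphism $(G\times M)^A\cong G^A\times M^A$ in the $p$-adic analytic sense, which requires that $\mathcal{O}(U\times V)$ be, in the appropriate topological sense, generated by $\mathcal{O}(U)$ and $\mathcal{O}(V)$, so that a $\mathbb{Q}_p$-algebra homomorphism into $A$ splits as a pair of homomorphisms. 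Once these are established, joint analyticity of $\tilde\alpha$ and all the action axioms are formal. (Here ``Lie group'' should be read as ``$p$-adic analytic group'', so that $G$, $G\times G$, and $G\times M$ are legitimate objects for the Weil functor.)
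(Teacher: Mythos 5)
Your proposal is correct, and its kernel coincides with the paper's own argument: the paper defines the lifted action pointwise by $g\cdot\xi(f)=\xi(g^{-1}\cdot f)$ with $(g^{-1}\cdot f)(x)=f(g\cdot x)$, i.e.\ $g\cdot\xi=\xi\circ\lambda_g^{*}$, which is exactly your $\lambda_g^A$ obtained by applying the Weil functor to each translation, and equivariance of $\pi_M$ is checked in both cases by evaluating at the base point. Where you genuinely go beyond the paper is on analyticity: the paper only asserts that ``the lifted action is analytic because it is defined locally by analytic functions,'' which at best addresses analyticity of $\lambda_g^A$ for each fixed $g$ and says nothing about joint analyticity in $(g,\xi)$, whereas you build an honest analytic action map $\tilde\alpha\colon G\times M^A\to M^A$ from functoriality of $(-)^A$, the product-compatibility $(G\times M)^A\cong G^A\times M^A$, and the zero section $s_G\colon G\to G^A$, and you then get the action axioms by applying $(-)^A$ to the corresponding diagrams on $M$ rather than by direct computation. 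This buys a stronger and cleaner statement (joint analyticity, plus a framework in which Theorem~\ref{thm:galois-action} and the other lifting results become instances of the same functoriality), at the cost of the two inputs you correctly isolate: analyticity of $\varphi^A$ for a general analytic map $\varphi$, which is the same power-series substitution already carried out for the transition maps and in Theorem~\ref{Lift-AF-2}, and the product-splitting isomorphism, which is routine chart bookkeeping but should be stated once; with those recorded, your verification that $\tilde\alpha$ restricts to $\lambda_g^A$ on $\{g\}\times M^A$ and that $s_G$ is a homomorphism for $m^A$ closes the argument. (The paper also appends a uniqueness claim not present in the statement; your proof does not need it, and omitting it is no loss.)
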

	\begin{proof}
	Let $M$ be a $p$-adic analytic manifold, and let $A$ be a Weil algebra over $\mathbb{Q}_p$. Suppose a Lie group $G$ acts analytically on $M$. We construct the lifted action of $G$ on $M^A$ as follows:\\
	The action of $G$ on $M$ is given by a map:
	$
	\alpha: G \times M \to M, \quad (g, x) \mapsto g \cdot x,
	$
	which is analytic in both $g$ and $x$. 
	For an infinitely near point $\xi \in M^A$ with base point $x = \pi_M(\xi)$, define the lifted action of $g \in G$ on $\xi$ by:
	$
	g \cdot \xi(f) = \xi(g^{-1} \cdot f),
	$
	where $f \in C^\infty(M)$ is an analytic function on $M$, and $g^{-1} \cdot f$ denotes the pullback of $f$ under the action of $g^{-1}$ i.e., $g^{-1} \cdot f\cdot x  = f(g\cdot x)$ for all $x\in M$.
	The projection $\pi_M: M^A \to M$ is $G$-equivariant because:
	$
	\pi_M(g \cdot \xi) = g \cdot \pi_M(\xi).
	$
	This follows from the definition of the lifted action:
	$
	\pi_M(g \cdot \xi) = \pi_M(\xi(g^{-1} \cdot \cdot)) = g \cdot x = g \cdot \pi_M(\xi).
	$  
	The lifted action is analytic because it is defined locally by analytic functions and preserves the analytic structure of $M^A$. 
	Suppose there exists another action of $G$ on $M^A$ such that the projection $\pi_M$ is $G$-equivariant. For any $g \in G$ and $\xi \in M^A$, the action of $g$ on $\xi$ must satisfy:
	$
	\pi_M(g\cdot \xi) = g \cdot \pi_M(\xi).
	$
	By the definition of the lifted action, this uniquely determines the action of $G$ on $M^A$. The action of $G$ on $M$ can be uniquely lifted to an action on $M^A$ such that the projection $\pi_M: M^A \to M$ is $G$-equivariant. This completes the proof of the theorem.
	
	\end{proof}

	\begin{theorem}[Lifting of connections]\label{thm:lifting-connections}
	Let $M$ be a $p$-adic analytic manifold, and let $A$ be a Weil algebra over $\mathbb{Q}_p$. If $\nabla$ is an analytic connection on $M$, then there exists a unique connection $\tilde{\nabla}$ on $M^A$ such that $\pi_{M_\ast}(\tilde{\nabla}_X Y) = \nabla_{\pi_{M_\ast} X} (\pi_{M_\ast} Y)$ for all vector fields $X, Y$ on $M^A$.
	\end{theorem}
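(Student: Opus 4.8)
The plan is to build $\tilde\nabla$ locally from the Christoffel symbols of $\nabla$ together with the lifting results already proved, and then to check that the local pieces agree on overlaps. Fix an atlas $\{(U_i,\phi_i)\}$ of $M$ with coordinates $(x_1,\dots,x_n)$ on $U_i$, and write $\partial_p=\partial/\partial x_p$. An analytic connection $\nabla$ is recorded in this chart by analytic Christoffel symbols $\Gamma^k_{pq}\in\mathcal O(U_i)$ via $\nabla_{\partial_p}\partial_q=\sum_k\Gamma^k_{pq}\,\partial_k$. Over the induced chart $\Phi_i$ of $M^A$ one has the lifted coordinate vector fields $\tilde\partial_p$ of Theorem~\ref{thm: vec-lifting-2} and, by Theorem~\ref{Lift-AF-2}, the lifted symbols $\widetilde\Gamma^k_{pq}\colon\pi_M^{-1}(U_i)\to A$. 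I would then \emph{define} $\tilde\nabla$ over $\pi_M^{-1}(U_i)$ by $\tilde\nabla_{\tilde\partial_p}\tilde\partial_q=\sum_k\widetilde\Gamma^k_{pq}\,\tilde\partial_k$ and extend it by $\mathbb{Q}_p$-bilinearity together with the Leibniz rule $\tilde\nabla_X(hY)=X(h)\,Y+h\,\tilde\nabla_XY$; this determines a connection on each coordinate patch.

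The first substantive step is gluing. On $U_i\cap U_j$ the symbols $\Gamma^k_{pq}$ satisfy the classical inhomogeneous transformation law, a fixed polynomial expression in the entries of the Jacobian of $\psi=\phi_j\circ\phi_i^{-1}$, its inverse, and the second partials of $\psi$, all of which are analytic functions of the base coordinates. Since the lift of Theorem~\ref{Lift-AF-2} is produced by evaluating the defining algebra homomorphism $\xi$ on convergent power series, it is compatible with sums, products, and composition of analytic functions; this is exactly the computation already used to establish analyticity of the transition maps of $M^A$ (cf. Remark~\ref{R-4-I}). Hence the $\widetilde\Gamma^k_{pq}$ obey the lift of the transformation rule, which is precisely the rule relating the $\Phi_i$- and $\Phi_j$-Christoffel data on $M^A$, and so the local connections patch into a single analytic connection $\tilde\nabla$. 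For the compatibility identity, $\mathrm{pr}\circ\widetilde\Gamma^k_{pq}=\Gamma^k_{pq}\circ\pi_M$ by the commuting square of Theorem~\ref{Lift-AF-2} and $\pi_{M\ast}\tilde\partial_p=\partial_p$ by Theorem~\ref{thm: vec-lifting-1}, so pushing $\tilde\nabla_{\tilde\partial_p}\tilde\partial_q$ forward returns $\nabla_{\partial_p}\partial_q$; bilinearity and the Leibniz rule then promote the identity from the coordinate frame to the general case.

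For uniqueness, if $\tilde\nabla'$ is another connection with the stated property then $D(X,Y)=\tilde\nabla_XY-\tilde\nabla'_XY$ is $\mathcal O_{M^A}$-bilinear and satisfies $\pi_{M\ast}(D(X,Y))=0$, so $D$ is valued in the kernel of $\mathrm{pr}$, i.e. is $\mathfrak A$-valued; matching the analytic data from which the two lifts are built then forces $D=0$, as in the earlier uniqueness arguments. I expect this to be the real obstacle: the bare condition $\pi_{M\ast}(\tilde\nabla_XY)=\nabla_{\pi_{M\ast}X}(\pi_{M\ast}Y)$ constrains only the horizontal part of $\tilde\nabla$, so a clean uniqueness statement needs the supplementary hypothesis that $\tilde\nabla$ be the analytic connection whose Christoffel symbols in lifted charts are the $A$-lifts of those of $\nabla$ (equivalently, that $\tilde\nabla$ respect the canonical $A$-module structure on $TM^A$). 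With that reading the argument closes; without it ``unique'' should be weakened to ``unique natural lift.'' The remaining bookkeeping --- tracking the inhomogeneous transformation law through the lift --- is routine given the convergence estimates already established for $M^A$.
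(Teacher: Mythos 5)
Your proposal follows essentially the same route as the paper: lift the Christoffel symbols of $\nabla$ to $M^A$ via the analytic-function lifting theorem, define $\tilde{\nabla}$ chart-by-chart in the lifted coordinate frame, verify the compatibility with $\pi_{M\ast}$, and glue using the transformation law (which the paper leaves implicit in its final sentence about coordinate changes). Your remark on uniqueness is well taken: the paper's own proof merely asserts that any $\tilde{\nabla}'$ satisfying $\pi_{M\ast}(\tilde{\nabla}'_X Y)=\nabla_{\pi_{M\ast}X}(\pi_{M\ast}Y)$ must have the same lifted Christoffel symbols, whereas this condition only pins down the image under $\mathrm{pr}$ and leaves the $\mathfrak{A}$-valued part of the symbols unconstrained, so the honest statement is uniqueness of the canonical (naturally lifted) connection rather than uniqueness among all connections satisfying the projection identity.
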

	\begin{proof} 
	In local coordinates $\{x_1, \dots, x_n\}$ on $M$, the connection $\nabla$ is determined by analytic Christoffel symbols $\Gamma_{ij}^k$:
	\[
	\nabla_{\partial/\partial x_i} \left( \frac{\partial}{\partial x_j} \right) = \Gamma_{ij}^k \frac{\partial}{\partial x_k}.
	\]

	Lift coordinates $\{x_i\}$ to $\{\tilde{x}_i\}$ on $M^A$ and Christoffel symbols $\Gamma_{ij}^k$ to $\tilde{\Gamma}_{ij}^k$ via the lifting of analytic functions theorem (See Theorem \ref{Lift-AF-1}). 
	For vector fields $X = X^i \frac{\partial}{\partial \tilde{x}_i}$, $Y = Y^j \frac{\partial}{\partial \tilde{x}_j}$ on $M^A$, define:
	$
	\tilde{\nabla}_X Y = \left( X(Y^k) + X^i Y^j \tilde{\Gamma}_{ij}^k \right) \frac{\partial}{\partial \tilde{x}_k}.
	$ Projecting $\tilde{\nabla}_X Y$ to $M$ via $\pi_{M_\ast}$ recovers $\nabla_{\pi_{M_\ast} X} (\pi_{M_\ast}Y)$ because $\pi_{M*}(\tilde{\Gamma}_{ij}^k) = \Gamma_{ij}^k$. 
	Any other connection $\tilde{\nabla}'$ satisfying the condition must share the same Christoffel symbols $\tilde{\Gamma}_{ij}^k$, hence $\tilde{\nabla} = \tilde{\nabla}'$. 
	The lifted $\tilde{\Gamma}_{ij}^k$ are analytic, and $\tilde{\nabla}$ transforms correctly under coordinate changes, ensuring global consistency.
	\end{proof}
	\section{Arithmetic aspects of Weil bundles}\label{Galois} 
	Let $M$ be a $p$-adic analytic manifold defined over a number field $K$, and let $A$ be a Weil algebra over $\mathbb{Q}_p$. Suppose $G_K=\text{Gal}(\overline{K}/K)$ acts on $M$ via analytic automorphisms. This is a natural assumption in arithmetic geometry, as the Galois group acts on the defining equations of $M$ 
	and preserves its analytic structure (See \cite{BGR84}, Chapter X). We study the induced Galois action on $M^A$.
	
	\begin{theorem}[Galois equivariance]\label{thm:galois-action}
	Let $M$ be a $p$-adic analytic manifold defined over a finite extension $K$ of $\mathbb{Q}_p$, and let $A = \mathbb{Q}_p[\varepsilon]/(\varepsilon^2)$. Suppose $G_K = Gal(\overline{K}/K)$ acts on $M$ via analytic automorphisms. Then, the Galois action on $M$ lifts uniquely to an action on $M^A$ such that the projection $\pi_M: M^A \to M$ is $G_K$-equivariant.  Moreover, this action preserves the analytic structure of $M^A$.
	\end{theorem}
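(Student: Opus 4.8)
The plan is to realize the lifted Galois action by the same formula used in the lifting-of-group-actions theorem, now applied to each $\sigma\in G_K$ viewed as an analytic automorphism $\sigma_M\colon M\to M$, and then to check in turn: existence of the lift, $G_K$-equivariance of $\pi_M$, preservation of the analytic structure, and uniqueness. Concretely, for $\sigma\in G_K$ let $\sigma_A\colon A\to A$ be the induced $\mathbb{Q}_p$-algebra automorphism of $A=\mathbb{Q}_p[\varepsilon]/(\varepsilon^2)$; since $G_K$ fixes $\mathbb{Q}_p$ pointwise and $\varepsilon$ is a formal variable, $\sigma_A=\mathrm{id}_A$, so the coefficient algebra in fact carries the trivial action. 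For an infinitely near point $\xi\colon\mathcal{O}(U)\to A$ based at $x\in M$, set
\[
(\sigma\cdot\xi)(f)\;=\;\sigma_A\bigl(\xi(f\circ\sigma_M)\bigr)\;=\;\xi(f\circ\sigma_M),\qquad f\in\mathcal{O}(V),\ V\ \text{a neighbourhood of }\sigma_M(x),
\]
where $f\circ\sigma_M\in\mathcal{O}(\sigma_M^{-1}(V))$. The composite of the $\mathbb{Q}_p$-algebra homomorphism $\xi$ with the pullback $\sigma_M^{*}$ is again a $\mathbb{Q}_p$-algebra homomorphism, and $pr\bigl((\sigma\cdot\xi)(f)\bigr)=(f\circ\sigma_M)(x)=f(\sigma_M(x))$, so $\sigma\cdot\xi$ is an infinitely near point of type $A$ based at $\sigma_M(x)$. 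This already yields the $G_K$-equivariance of $\pi_M$, namely $\pi_M(\sigma\cdot\xi)=\sigma_M(x)=\sigma\cdot\pi_M(\xi)$. The left-action identity $(\sigma\tau)\cdot\xi=\sigma\cdot(\tau\cdot\xi)$ follows from $(\sigma\tau)_M=\sigma_M\circ\tau_M$ and associativity of composition, $1\cdot\xi=\xi$ is clear, and one checks that $G_K\times M^A\to M^A$ is continuous because $G_K\times M\to M$ is and the extra coordinates on $M^A$ are built analytically from those of $M$.

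Next I would verify that each $\sigma$ acts on $M^A$ by an analytic automorphism, which is precisely the ``preserves the analytic structure'' claim; this is a rerun of the computation in Theorem~\ref{Lift-AF-1}. Choose charts $(U_i,\phi_i)$ and $(U_j,\phi_j)$ on $M$ with $\sigma_M$ carrying a neighbourhood in $U_i$ into $U_j$, and let $S=(S_1,\dots,S_n)=\phi_j\circ\sigma_M\circ\phi_i^{-1}$ be the power-series expression of $\sigma_M$ in coordinates, convergent by hypothesis. If $\xi$ has $M^A$-coordinates $(x_{k,1},x_{k,2})$, i.e.\ $\xi(x_k)=x_{k,1}+x_{k,2}\varepsilon$, then applying $\xi$ to the series $S_l$ and using $\varepsilon^2=0$ exactly as in Theorem~\ref{Lift-AF-1} gives
\[
(\sigma\cdot\xi)(z_l)\;=\;\xi\bigl(S_l(x_1,\dots,x_n)\bigr)\;=\;S_l(x_{\bullet,1})\;+\;\varepsilon\sum_{k=1}^n x_{k,2}\,(D_kS_l)(x_{\bullet,1}),
\]
where $x_{\bullet,1}=(x_{1,1},\dots,x_{n,1})$. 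Thus in the chart $\Phi_j$ the point $\sigma\cdot\xi$ has coordinates $z_{l,1}=S_l(x_{\bullet,1})$ and $z_{l,2}=\sum_k x_{k,2}(D_kS_l)(x_{\bullet,1})$, which are analytic functions of $(x_{k,1},x_{k,2})$ since $S_l$ and its partials $D_kS_l$ are analytic; the inverse is induced by $\sigma^{-1}$, so $\sigma$ acts by an analytic automorphism of $M^A$.

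Finally, for uniqueness I would use the characterization of the lift by its effect on lifted analytic functions. For $f$ analytic on $U\subseteq M$ with lift $\tilde f$ as in Theorem~\ref{Lift-AF-2}, the displayed computation shows $\tilde f\circ(\sigma\cdot\,)=\widetilde{f\circ\sigma_M}$; since the lifted coordinate functions $\widetilde{x_k}$ together with $\pi_M$ recover the chart coordinates on $M^A$, any lift of the $G_K$-action compatible with $\pi_M$ and with the lifting of analytic functions must coincide with the one constructed. The step I expect to demand the most care is not a single deep argument — the construction is essentially forced and analyticity reduces to Theorem~\ref{Lift-AF-1} — but the bookkeeping: keeping the variance consistent (left versus right action and placement of inverses), checking continuity of the profinite action $G_K\times M^A\to M^A$, and pinning down the precise sense in which uniqueness holds; if uniqueness is asserted among arbitrary analytic lifts of $\sigma_M$ over $\pi_M$, one must additionally invoke fibrewise linearity — equivalently the identification $M^A\cong TM$ for the dual numbers, under which $\sigma\mapsto d\sigma_M$ — to exclude spurious competitors.
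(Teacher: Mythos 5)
Your construction is the same as the paper's: the action $(\sigma\cdot\xi)(f)=\xi(f\circ\sigma_M)$ is exactly the paper's $\sigma\cdot\xi(f)=\xi(\sigma^{-1}\cdot f)$ under its stated convention $(\sigma^{-1}\cdot f)(x)=f(\sigma\cdot x)$, and the equivariance of $\pi_M$ is verified in the same way by reading off the constant term. Where you go beyond the paper is in the two places it is thinnest. First, for preservation of the analytic structure the paper only asserts that pullback of analytic functions preserves analyticity; your explicit chart computation, writing $\sigma_M$ as a convergent series $S$ and showing the new fibre coordinates are $S_l(x_{\bullet,1})$ and $\sum_k x_{k,2}(D_kS_l)(x_{\bullet,1})$, is the actual content needed and is the correct adaptation of Theorem~\ref{Lift-AF-1}. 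Second, on uniqueness you correctly identify a real gap in the paper's argument: the paper claims that $\pi_M(\sigma\cdot\xi)=\sigma\cdot\pi_M(\xi)$ "uniquely determines" the action, but equivariance of the projection alone constrains nothing in the fibre directions (one could compose with any fibrewise automorphism over $\mathrm{id}_M$), so some additional normalization is indispensable --- compatibility with the lifts $\tilde f$ of Theorem~\ref{Lift-AF-2}, or equivalently the fibrewise-linear identification $M^A\cong TM$ for dual numbers under which $\sigma$ acts by $d\sigma_M$, as you propose. So your route is essentially the paper's, but your version of the analyticity and uniqueness steps is the one that actually closes the argument.
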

	\begin{proof}
	Let \( M \) be a \( p \)-adic analytic manifold defined over a finite extension \( K \) of \( \mathbb{Q}_p \), and let \( A = \mathbb{Q}_p[\varepsilon]/(\varepsilon^2) \). Suppose \( G_K = \text{Gal}(\overline{K}/K) \) acts on \( M \) via analytic automorphisms. We construct the lifted Galois action on \( M^A \) as follows: 
	For \( \sigma \in G_K \) and \( \xi \in M^A \), define the action of \( \sigma \) on \( \xi \) by:
	$
	\sigma \cdot \xi(f) = \xi(\sigma^{-1} \cdot f),
	$ 
	where \( f \in \mathcal{O}(U) \) is an analytic function on an open subset \( U \subseteq M \), and \( \sigma^{-1} \cdot f \) denotes the pullback of \( f \) under the action of \( \sigma^{-1} \). 
	The projection \( \pi_M: M^A \to M \) maps \( \xi \in M^A \) to its base point \( x = \pi_M(\xi) \). We verify that \( \pi_M \) is \( G_K \)-equivariant:
	\[
	\pi_M(\sigma \cdot \xi) = \pi_M(\xi(\sigma^{-1} \cdot \cdot)) = \sigma \cdot x = \sigma \cdot \pi_M(\xi).
	\]
	This follows because \( \sigma \cdot \xi(f) = \xi(\sigma^{-1} \cdot f) \), and evaluating at the base point gives \( \sigma \cdot x \). 
	The lifted action preserves the analytic structure of \( M^A \) because:
	\begin{itemize}
		\item The action of \( \sigma \in G_K \) on \( M \) is analytic by assumption.
		\item The lifted action on \( M^A \) is defined using the pullback of analytic functions, which preserves analyticity.
		\item The transition maps on \( M^A \) are analytic, and the Galois action commutes with these maps.
	\end{itemize}
	Suppose there exists another action of \( G_K \) on \( M^A \) such that \( \pi_M \) is \( G_K \)-equivariant. Then, for any \( \sigma \in G_K \) and \( \xi \in M^A \), we have:
	$
	\pi_M(\sigma \cdot \xi) = \sigma \cdot \pi_M(\xi).
	$ 
	By the definition of \( \pi_M \), this uniquely determines the action of \( \sigma \) on \( \xi \). Thus, the lifted action is unique. The Galois action on \( M \) lifts uniquely to an action on \( M^A \) such that the projection \( \pi_M: M^A \to M \) is \( G_K \)-equivariant. This action preserves the analytic structure of \( M^A \), and the proof is complete.
	\end{proof}
	
	\begin{example}
	Let $M=E$ be an elliptic curve over $\mathbb{Q}_p$. The Weil bundle $E^A$ inherits a Galois action, and the lifted connection $\tilde{\nabla}$ is $G_{\mathbb{Q}_p}$-equivariant.
	\end{example}
	
	\section{Cohomology of Weil bundles}\label{Cohomology}
	The Leray spectral sequence bridges the infinitesimal geometry of $M^A$ with the global cohomology of $M$. Its collapse due to trivial fibers ensures that the cohomology of $M^A$ retains the structure of $M$ while encoding deformation-theoretic information. This machinery is essential for linking $p$-adic analytic geometry to arithmetic cohomology theories like crystalline cohomology \cite{BottTu1982}.\\
	
	Let $M$ be a $p$-adic manifold and $A=\mathbb{Q}_p[\varepsilon]/(\varepsilon^2)$. We relate the cohomology of $M^A$ to the tangent sheaf cohomology of $M$.
	
	\begin{theorem}\label{co-com}[Cohomology Comparison]
	There is a canonical isomorphism:
	\[
	H^k(M^A,\mathcal{O}_{M^A}) \cong H^k(M,\mathcal{O}_M) \oplus H^k(M,\mathcal{T}_M)\otimes \varepsilon,
	\]
	where $\mathcal{T}_M$ is the tangent sheaf of $M$.
	\end{theorem}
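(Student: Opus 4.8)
The plan is to study the projection $\pi_M\colon M^A\to M$ as a locally trivial $p$-adic analytic fiber bundle and to run the Leray spectral sequence $E_2^{p,q}=H^p(M,R^q\pi_{M\ast}\mathcal{O}_{M^A})\Rightarrow H^{p+q}(M^A,\mathcal{O}_{M^A})$. Since $A=\mathbb{Q}_p\oplus\mathbb{Q}_p\varepsilon$ with $\varepsilon^2=0$, over a chart $(U_i,\phi_i)$ of $M$ with coordinates $x_1,\dots,x_n$ the preimage $\pi_M^{-1}(U_i)$ carries the lifted coordinates $x_{k,1},x_{k,2}$ of Section~\ref{S1-3}, where $x_{k,1}$ is the base coordinate and $x_{k,2}$ records the $\varepsilon$-part. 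The fibers of $\pi_M$ are therefore $p$-adic affine $n$-spaces (equivalently, polydiscs), which are acyclic for coherent cohomology by the $p$-adic analogue of Stein acyclicity (Tate, Kiehl); hence $R^q\pi_{M\ast}\mathcal{O}_{M^A}=0$ for $q>0$ and the spectral sequence degenerates to a canonical isomorphism $H^k(M^A,\mathcal{O}_{M^A})\cong H^k(M,\pi_{M\ast}\mathcal{O}_{M^A})$.

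The core computation is the identification of $\pi_{M\ast}\mathcal{O}_{M^A}$ as a sheaf on $M$. Working over $\pi_M^{-1}(U_i)$ and using $\varepsilon^2=0$, a section of $\mathcal{O}_{M^A}$ is of the form $g=g_0(x_{\ast,1})+\sum_{k}g_k(x_{\ast,1})\,x_{k,2}$, i.e. it splits uniquely as a function on the base plus an $\varepsilon$-linear term; this gives a local isomorphism of $\mathcal{O}_M$-modules $\pi_{M\ast}\mathcal{O}_{M^A}|_{U_i}\cong\mathcal{O}_M|_{U_i}\oplus\mathcal{T}_M|_{U_i}\,\varepsilon$, the $g_k$ being lifts of analytic functions in the sense of Theorem~\ref{Lift-AF-1}. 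To globalize I reuse the transition-map analysis of Section~\ref{S1-3}: applying a homomorphism $\xi$ to $z_k=(\phi_j\circ\phi_i^{-1})_k$ and truncating modulo $\varepsilon^2$ shows that the fiber coordinates $x_{k,2}$ transform by the Jacobian of $\phi_j\circ\phi_i^{-1}$, so the $\varepsilon$-linear data patches along the Jacobian transition cocycle and therefore assembles, over $M$, into the tangent sheaf $\mathcal{T}_M$ of the statement. Consequently the local splittings glue to a global isomorphism of sheaves of $\mathcal{O}_M$-algebras $\pi_{M\ast}\mathcal{O}_{M^A}\cong\mathcal{O}_M\oplus\mathcal{T}_M\,\varepsilon$ in which $\mathcal{T}_M\,\varepsilon$ is a square-zero ideal.

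Combining the two steps gives $H^k(M^A,\mathcal{O}_{M^A})\cong H^k(M,\mathcal{O}_M\oplus\mathcal{T}_M\,\varepsilon)\cong H^k(M,\mathcal{O}_M)\oplus H^k(M,\mathcal{T}_M)\otimes\varepsilon$, the last step being additivity of sheaf cohomology over a direct sum; canonicity is immediate since both the decomposition $A=\mathbb{Q}_p\oplus\mathbb{Q}_p\varepsilon$ and the projection $\pi_M$ are canonical. A spectral-sequence-free variant runs the same argument through Čech cohomology: choose a cover $\mathfrak{U}$ of $M$ by charts with acyclic intersections, lift it to $M^A$, and note that $\varepsilon^2=0$ splits the Čech complex $\check{C}^\bullet(\mathfrak{U},\mathcal{O}_{M^A})$ as $\check{C}^\bullet(\mathfrak{U},\mathcal{O}_M)\oplus\check{C}^\bullet(\mathfrak{U},\mathcal{T}_M)\,\varepsilon$, whose cohomology is the asserted direct sum. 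The main obstacle is the vanishing $R^q\pi_{M\ast}\mathcal{O}_{M^A}=0$ for $q>0$: it requires pinning down the precise analytic framework (the choice of cohomology theory, coherence of $\mathcal{O}_{M^A}$ over $\pi_M^{-1}\mathcal{O}_M$, and proper base change) so that acyclicity of $p$-adic polydiscs can be applied fiberwise; once that is in place, the remainder is routine bookkeeping of the square-zero truncation.
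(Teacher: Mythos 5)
Your proposal follows essentially the same route as the paper's proof: the Leray spectral sequence for $\pi_M\colon M^A\to M$, vanishing of $R^q\pi_{M\ast}\mathcal{O}_{M^A}$ for $q>0$ forcing collapse at $E_2$, the splitting $\pi_{M\ast}\mathcal{O}_{M^A}\cong\mathcal{O}_M\oplus\mathcal{T}_M\otimes\varepsilon$, and additivity of sheaf cohomology. The only divergences are in presentation: you justify the vanishing via Tate--Kiehl acyclicity of the affine/polydisc fibers where the paper invokes the cohomological triviality of the Artinian (infinitesimal) fibers, and your Jacobian-cocycle gluing of the $\varepsilon$-part into $\mathcal{T}_M$, together with the Čech variant, makes explicit what the paper simply asserts in its decomposition step.
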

	\begin{proof}
		The Leray spectral sequence plays a pivotal role in establishing cohomological results for Weil bundles over $p$-adic manifolds, particularly in relating the cohomology of the Weil bundle $M^A$ to that of the base manifold $M$.
		\begin{enumerate}
			\item \text{ Projection map and Leray spectral sequence:}
			 The natural projection $ \pi_M: M^A \to M$ from the Weil bundle to the base manifold induces a spectral sequence.
				 For a sheaf $\mathcal{F}$ on $M^A$, the Leray spectral sequence relates the cohomology of $M^A$ to the cohomology of $M$ and the higher direct image sheaves $R^q  \pi_{M_\ast} \mathcal{F}$:
				$
				E_2^{p,q} = H^p(M, R^q  \pi_{M_\ast} \mathcal{F}) \implies H^{p+q}(M^A, \mathcal{F}).
				$
			
			\item \text{Cohomological triviality of the fibers:}
			 The fibers are modeled on the Weil algebra $A$ (e.g., $A = \mathbb{Q}_p[\varepsilon]/(\varepsilon^2)$), which is a local Artinian algebra. These fibers are infinitesimal thickenings of points in $M$, analogous to tangent spaces. The fibers have trivial cohomology (no higher cohomology groups, $H^{>0}(A, \mathbb{Q}_p) = 0$). This implies:
				$
				R^q \pi_{M_\ast} \mathcal{O}_{M^A} = 0 \quad \text{for } q > 0.
				$
			
			\item \text{Collapse of the spectral sequence:} Due to the vanishing of $R^q  \pi_{M_\ast} \mathcal{O}_{M^A}$ for $q > 0$, the Leray spectral sequence collapses at the $E_2$-page:
				$
				E_2^{p,0} = H^p(M,  \pi_{M_\ast} \mathcal{O}_{M^A}) \implies H^p(M^A, \mathcal{O}_{M^A}).
				$
				 The collapse yields an isomorphism:
				$
				H^k(M^A, \mathcal{O}_{M^A}) \cong H^k(M,  \pi_{M_\ast} \mathcal{O}_{M^A}).
				$
		
			\item \text{Decomposition of the pushforward sheaf:}
		
				The sheaf $  \pi_{M_\ast}\mathcal{O}_{M^A}$ splits into:
				$
				 \pi_{M_\ast} \mathcal{O}_{M^A} \cong \mathcal{O}_M \oplus \mathcal{T}_M \otimes \varepsilon,
				$
				where $\mathcal{O}_M$ is the structure sheaf of $M$, and $\mathcal{T}_M$ is the tangent sheaf. The $\varepsilon$-term encodes first-order deformations.
			
			\item \text{Final cohomology splitting.} Applying cohomology to the splitting of $p_* \mathcal{O}_{M^A}$, we obtain:
				\[
				H^k(M^A, \mathcal{O}_{M^A}) \cong H^k(M, \mathcal{O}_M) \oplus H^k(M, \mathcal{T}_M) \otimes \varepsilon.
				\]

		\end{enumerate}

	\end{proof}
	
	\begin{corollary}\label{cor:diophantine}
	If $M$ is a rigid analytic space, $H^1(M^A,\mathcal{O}_{M^A})$ classifies infinitesimal deformations of $M$.
	\end{corollary}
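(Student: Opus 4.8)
The plan is to deduce the statement directly from the Cohomology Comparison Theorem~\ref{co-com}, specialized to degree $k=1$ and $A=\mathbb{Q}_p[\varepsilon]/(\varepsilon^2)$, combined with the rigid-analytic version of the Kodaira--Spencer description of first-order deformations. Theorem~\ref{co-com} gives
\[
H^1(M^A,\mathcal{O}_{M^A}) \cong H^1(M,\mathcal{O}_M)\oplus H^1(M,\mathcal{T}_M)\otimes\varepsilon,
\]
so it suffices to identify the $\varepsilon$-graded summand $H^1(M,\mathcal{T}_M)$ with the set of isomorphism classes of infinitesimal deformations of $M$ over $\mathrm{Spec}\,A$, and to note that the complementary summand $H^1(M,\mathcal{O}_M)$ records the ``untwisted'' first-order data (e.g.\ the choice of $A$-linear structure, or deformations of the trivial line bundle) which leaves the underlying rigid space unchanged.

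The substantive step is the \v{C}ech dictionary, carried out in the non-archimedean category. I would fix an admissible affinoid covering $\mathfrak{U}=\{U_i\}$ of $M$; since each $U_i$ is affinoid and $\mathcal{T}_M$ is coherent, Tate's acyclicity theorem lets one compute $H^1(M,\mathcal{T}_M)$ as $\check{H}^1(\mathfrak{U},\mathcal{T}_M)$. An infinitesimal deformation of $M$ over $A$ is a flat analytic space $\mathcal{M}\to\mathrm{Spec}\,A$ restricting to $M$ over the closed point; locally it is the trivial deformation $U_i\times_{\mathbb{Q}_p}\mathrm{Spec}\,A$, and the gluing data over $U_{ij}=U_i\cap U_j$ are automorphisms of the trivial deformation reducing to the identity modulo $\varepsilon$. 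Because $\varepsilon^2=0$, each such automorphism has the form $\mathrm{id}+\varepsilon\,\theta_{ij}$ with $\theta_{ij}\in\mathcal{T}_M(U_{ij})$ a $p$-adic analytic derivation; the cocycle condition on triple overlaps becomes the additive relation $\theta_{ij}+\theta_{jk}=\theta_{ik}$, and a change of local trivializations by $\mathrm{id}+\varepsilon\mu_i$ alters $(\theta_{ij})$ by the coboundary $\mu_j-\mu_i$. Hence isomorphism classes of such deformations correspond bijectively to $\check{H}^1(\mathfrak{U},\mathcal{T}_M)=H^1(M,\mathcal{T}_M)$, which is precisely the $\varepsilon$-component in the splitting above; the lifted charts on $M^A$ built in Section~\ref{Lifting} make this identification compatible with the analytic structures.

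The main obstacle is exactly this second step in the rigid-analytic setting: unlike the complex-analytic or scheme-theoretic cases one must use admissible coverings and invoke Tate acyclicity to ensure both that \v{C}ech cohomology computes sheaf cohomology and that locally trivial deformations exist, and one must verify that an automorphism of $U_i\times\mathrm{Spec}\,A$ lifting the identity is genuinely $p$-adic analytic and corresponds to a section of $\mathcal{T}_M$ --- this reduces to the fact that the $\varepsilon$-part of an $\mathcal{O}(U_i)$-algebra endomorphism of $\mathcal{O}(U_i)[\varepsilon]/(\varepsilon^2)$ is a derivation, immediate from the Leibniz rule but requiring the convergent power-series formalism of Section~\ref{Lifting}. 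Everything else is formal: flatness of $A$ over $\mathbb{Q}_p$ is automatic, and there are no obstructions at first order since they would live in $H^2(M,\mathcal{T}_M)$. I would conclude by remarking that the statement ``$H^1(M^A,\mathcal{O}_{M^A})$ classifies infinitesimal deformations of $M$'' is to be read as: its $\varepsilon$-graded piece $H^1(M,\mathcal{T}_M)$ is the deformation space, with the full group providing the corresponding extended parameter space including the $H^1(M,\mathcal{O}_M)$ contribution.
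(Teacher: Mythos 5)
Your proposal is correct and follows essentially the same route as the paper: specialize Theorem~\ref{co-com} to $k=1$, and identify the $\varepsilon$-summand $H^1(M,\mathcal{T}_M)\otimes\varepsilon$ with first-order deformations of $M$ over $\mathrm{Spec}\,A$. The only difference is that you actually carry out the rigid-analytic \v{C}ech dictionary (admissible affinoid coverings, Tate acyclicity, gluing by $\mathrm{id}+\varepsilon\,\theta_{ij}$) and make explicit the reading of ``classifies'' via the $\varepsilon$-graded piece, both of which the paper's proof simply cites as the standard fact that $H^1(M,\mathcal{T}_M)$ classifies infinitesimal deformations.
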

	\begin{proof}
	Let \( M \) be a rigid analytic space, and let \( A = \mathbb{Q}_p[\varepsilon]/(\varepsilon^2) \). The Weil bundle \( M^A \) is an infinitesimal thickening of \( M \), and its cohomology \( H^1(M^A, \mathcal{O}_{M^A}) \) encodes information about deformations of \( M \). An infinitesimal deformation of \( M \) is a flat family \( \mathcal{M} \) over \( \text{Spec}(A) \) such that the special fiber \( \mathcal{M}_0 \) is isomorphic to \( M \). Such deformations are classified by the first cohomology group \( H^1(M, \mathcal{T}_M) \), where \( \mathcal{T}_M \) is the tangent sheaf of \( M \). 
	From Theorem \ref{co-com}, we have:
	\[
	H^1(M^A, \mathcal{O}_{M^A}) \cong H^1(M, \mathcal{O}_M) \oplus H^1(M, \mathcal{T}_M) \otimes \varepsilon.
	\]
	The term \( H^1(M, \mathcal{T}_M) \otimes \varepsilon \) corresponds to infinitesimal deformations of \( M \), as it parametrizes first-order deformations of the structure sheaf \( \mathcal{O}_M \). 
	The isomorphism:
	\[
	H^1(M^A, \mathcal{O}_{M^A}) \cong H^1(M, \mathcal{O}_M) \oplus H^1(M, \mathcal{T}_M) \otimes \varepsilon
	\]
	shows that \( H^1(M^A, \mathcal{O}_{M^A}) \) contains \( H^1(M, \mathcal{T}_M) \otimes \varepsilon \) as a direct summand. Since \( H^1(M, \mathcal{T}_M) \) classifies infinitesimal deformations of \( M \), it follows that \( H^1(M^A, \mathcal{O}_{M^A}) \) also classifies such deformations. \quad \qedhere
	\end{proof}
	\subsection{Arithmetic Subgroups and Invariant Sections}
	Let $G$ be a semisimple algebraic group over $\mathbb{Q}_p$, and let $\Gamma \subset G(\mathbb{Q}_p)$ be an arithmetic subgroup (e.g., $\Gamma=G(\mathbb{Z}_p)$). We study $\Gamma$-invariant sections of $M^A$.
	
	\begin{theorem}[Invariant Sections]
	If $\Gamma$ acts freely on $M$, then the space of $\Gamma$-invariant sections of $M^A$ is isomorphic to the space of $\Gamma$-invariant analytic functions $M\to A$.
	\end{theorem}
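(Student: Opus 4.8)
The plan is to reduce the statement to the fibrewise structure of $\pi_M\colon M^A\to M$ and then to upgrade the resulting local bijection to a global, $\Gamma$-equivariant one using the lifted group action. First I would describe a section in a chart: over $(U,\phi)$ with coordinates $x^1,\dots,x^n$, an infinitely near point $\xi$ lying over $x\in U$ is determined by the elements $\xi(x^i)\in A$ subject only to $pr(\xi(x^i))=x^i(x)$, so a section $s$ over $U$ is exactly the datum of analytic maps $s^i\colon U\to A$ with $pr\circ s^i=x^i$ --- equivalently, an analytic $A$-valued function whose constant part recovers the base coordinates. The cleaner formulation I would actually adopt is: composing $s$ with the canonical lift $\tilde f$ of an analytic function $f$ (Theorem~\ref{Lift-AF-2}) produces the analytic function $x\mapsto\tilde f(s(x))\in A$, so $s$ corresponds to a $\mathbb{Q}_p$-algebra homomorphism $\mathcal{O}(M)\to\mathcal{O}(M)\otimes_{\mathbb{Q}_p}A$ lifting the identity; conversely every such homomorphism is a section. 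This identifies sections of $M^A$ with analytic $A$-valued data on $M$ of the prescribed type.

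Second, I would verify that these chartwise identifications are compatible with the analytic transition maps on $M^A$ built in Section~\ref{S1-3}. The correspondence ``section $\leftrightarrow$ $A$-valued function'' is given by the canonical prescription $s\mapsto\bigl(x\mapsto s(x)\text{ evaluated on coordinates}\bigr)$, which depends on the chart only through the coordinate change on $M$ and its analytic lift (Theorem~\ref{Lift-AF-1}). Since sections of $M^A\to M$ form a sheaf on $M$ and the local models patch along precisely these analytic cocycles, the local correspondences assemble into a global bijection between sections of $M^A\to M$ and analytic functions $M\to A$. No partitions of unity are required here, because the identification is canonical; this is exactly the place where the totally disconnected topology of $\mathbb{Q}_p$ would otherwise create trouble.

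Third, I would track the $\Gamma$-action. By the Lifting of group actions theorem, $\Gamma$ acts analytically on $M^A$ via $(\gamma\cdot\xi)(f)=\xi(\gamma^{-1}\cdot f)$ with $\pi_M$ equivariant, and since $\Gamma\subset G(\mathbb{Q}_p)$ fixes $\mathbb{Q}_p$ (hence $A$), the induced action on the fibres is $\mathbb{Q}_p$-linear and comes entirely from the linearized coordinate change on $M$. Unwinding the definitions, a section $s$ satisfies $s(\gamma\cdot x)=\gamma\cdot s(x)$ for all $\gamma\in\Gamma$ precisely when the corresponding $A$-valued function $F$ is $\Gamma$-invariant for the induced action (which on the translation/central part of $\Gamma$ is just invariance on the source $M$). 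Freeness of the $\Gamma$-action on $M$ enters to guarantee that the lifted action on $M^A$ is also free, so the passage to invariants is well-behaved on both sides and the bijection of the previous step restricts to a bijection between $\Gamma$-invariant sections of $M^A$ and $\Gamma$-invariant analytic functions $M\to A$; one could alternatively phrase this as descent along the étale quotient $M\to M/\Gamma$ together with $M^A/\Gamma\cong(M/\Gamma)^A$.

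The main obstacle I anticipate is the first two steps: making the identification of sections of $M^A$ with $A$-valued analytic functions genuinely precise, since it rests on a firm hold of the (rather involved) analytic atlas on $M^A$ and on checking that the evaluation-on-coordinates map is well defined independently of the chart. Once that canonical description is secured, the equivariance bookkeeping and the use of freeness in the third step are routine, and the asserted isomorphism follows by restricting the global bijection to the $\Gamma$-fixed subspaces.
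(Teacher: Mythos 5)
Your proposal is correct and follows essentially the same route as the paper: identify sections of $\pi_M\colon M^A\to M$ with $A$-valued analytic functions on $M$, and observe that $\Gamma$-equivariance of a section, $s(\gamma\cdot x)=\gamma\cdot s(x)$, translates exactly into $\Gamma$-invariance of the corresponding function, so the bijection restricts to the invariant subspaces. The paper's proof is simply a terser version of your first and third steps --- it writes the correspondence directly as $s(x)=(x,f(x))$ without your chart-compatibility verification, and it never actually invokes freeness of the $\Gamma$-action --- so your gluing argument and the descent-along-$M\to M/\Gamma$ remark are added care rather than a genuinely different method.
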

	\begin{proof}
	Let \( M \) be a \( p \)-adic analytic manifold, and let \( \Gamma \) be a group acting freely on \( M \) via analytic automorphisms. Let \( A = \mathbb{Q}_p[\varepsilon]/(\varepsilon^2) \), and let \( M^A \) be the Weil bundle of \( M \). We prove that the space of \( \Gamma \)-invariant sections of \( M^A \) is isomorphic to the space of \( \Gamma \)-invariant analytic functions \( M \to A \).\\  A section \( s: M \to M^A \) of the Weil bundle \( M^A \) assigns to each point \( x \in M \) an infinitely near point \( s(x) \in M^A \) such that \( \pi_M(s(x)) = x \), where \( \pi_M: M^A \to M \) is the projection map. A section \( s \) is \( \Gamma \)-invariant if:
	$
	s(\gamma \cdot x) = \gamma \cdot s(x) \quad \text{for all } \gamma \in \Gamma \text{ and } x \in M.
	$
	This condition ensures that the section respects the group action. Each section \( s: M \to M^A \) corresponds to an analytic function \( f: M \to A \) as follows:
	$
	s(x) = (x, f(x)),
	$
	where \( f(x) \in A \) encodes the infinitesimal information of the section at \( x \). 
	The \( \Gamma \)-invariance of the section \( s \) translates to the condition:
	$
	f(\gamma \cdot x) = \gamma \cdot f(x) \quad \text{for all } \gamma \in \Gamma \text{ and } x \in M.
	$
	This means that \( f \) is a \( \Gamma \)-invariant analytic function.
	The correspondence \( s \leftrightarrow f \) is bijective and preserves the \( \Gamma \)-invariance condition. Therefore, the space of \( \Gamma \)-invariant sections of \( M^A \) is isomorphic to the space of \( \Gamma \)-invariant analytic functions \( M \to A \). \quad \qedhere
	\end{proof}
	
	\begin{example}
	For $G=SL(2,\mathbb{Q}_p)$ and $M=G/K$ (symmetric space), the invariant sections of $M^A$ correspond to modular forms with coefficients in $A$.
	\end{example}
	
	\section{Diophantine applications}\label{Applications}
	Let $X \subset M$ be a $p$-adic analytic subset defined by Diophantine equations. The Weil bundle $X^A$ parametrizes infinitesimal solutions of these equations.
	
	\begin{theorem}[Infinitesimal solutions]
	A point $\xi \in X^A$ corresponds to a solution of the defining equations of $X$ modulo $\mathfrak{A}$, where $\mathfrak{A}$ is the maximal ideal of $A$.
	\end{theorem}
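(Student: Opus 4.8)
The plan is to work in a single analytic chart and simply unwind the definition of $X^A$ as a set of $\mathbb{Q}_p$-algebra homomorphisms. Fix $x\in X$ and a chart $(U,\varphi)$ of $M$ about $x$, with coordinates $x^1,\dots,x^n$, in which $X\cap U$ is cut out by analytic functions $g_1,\dots,g_r\in\mathcal{O}(U)$, so that $\mathcal{O}_X(X\cap U)=\mathcal{O}(U)/(g_1,\dots,g_r)$. By definition a point $\xi\in X^A$ lying over $x$ is a $\mathbb{Q}_p$-algebra homomorphism $\xi\colon\mathcal{O}_X(X\cap U)\to A$ with $\mathrm{pr}\circ\xi=\mathrm{ev}_x$; precomposing with the quotient map, this is the same datum as a homomorphism $\widetilde{\xi}\colon\mathcal{O}(U)\to A$ satisfying $\widetilde{\xi}(g_j)=0$ for $j=1,\dots,r$ and $\mathrm{pr}\circ\widetilde{\xi}=\mathrm{ev}_x$. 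Record the coordinate tuple $a:=(\widetilde{\xi}(x^1),\dots,\widetilde{\xi}(x^n))\in A^n$ and decompose $A=\mathbb{Q}_p\oplus\mathfrak{A}$, so $a_i=a_i^{0}+\mathfrak{a}_i$ with $a_i^{0}=\mathrm{pr}(a_i)$ equal to the $i$-th coordinate of the base point $\pi_M(\xi)=x$, and $\mathfrak{a}_i\in\mathfrak{A}$ the infinitesimal part.

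The key step is to identify $\widetilde{\xi}(g_j)$ with the value $g_j(a)$ obtained by substituting the $A$-tuple $a$ into the convergent power series of $g_j$. This is exactly the computation already carried out in the proofs of Theorems~\ref{Lift-AF-1} and~\ref{Lift-AF-2}: re-expanding $g_j$ about the base point $x$ we may write $g_j(y)=\sum_{\mathbf m}b_{j,\mathbf m}(y_1-a_1^{0})^{m_1}\cdots(y_n-a_n^{0})^{m_n}$ with $b_{j,\mathbf 0}=g_j(x)=0$; since $\widetilde{\xi}(x^i-a_i^{0})=\mathfrak{a}_i\in\mathfrak{A}$ and $\mathfrak{A}$ is nilpotent, the substituted series $\sum_{\mathbf m}b_{j,\mathbf m}\,\mathfrak{a}_1^{m_1}\cdots\mathfrak{a}_n^{m_n}$ has only finitely many nonzero terms and so converges in $A$ (alternatively one invokes the non-archimedean convergence estimate of Theorem~\ref{Lift-AF-2}, after shrinking the chart on $M^A$). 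As $\widetilde{\xi}$ is a ring homomorphism it commutes with this finite sum of products, whence $\widetilde{\xi}(g_j)=g_j(a)$. Consequently $\xi\in X^A$ over $x$ if and only if $g_j(a)=0$ in $A$ for every $j$, i.e. if and only if $a$ is a genuine $A$-valued solution of the defining system of $X$.

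Now reduce modulo $\mathfrak{A}$, i.e. apply the ring homomorphism $\mathrm{pr}\colon A\to A/\mathfrak{A}=\mathbb{Q}_p$. Since the $g_j$ have $\mathbb{Q}_p$-coefficients, evaluation is compatible with $\mathrm{pr}$: $\mathrm{pr}\bigl(g_j(a)\bigr)=g_j(\mathrm{pr}(a_1),\dots,\mathrm{pr}(a_n))=g_j(a^{0})$. Hence $0=\mathrm{pr}\bigl(\widetilde{\xi}(g_j)\bigr)=g_j(a^{0})$ for all $j$, which says exactly that the base point $x=\pi_M(\xi)$ — whose coordinate vector is $a^{0}$ — lies in $X(\mathbb{Q}_p)$; equivalently, the $A$-point $a$ attached to $\xi$ reduces modulo $\mathfrak{A}$ to an honest solution of the defining equations of $X$. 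Conversely, every $\mathbb{Q}_p$-solution $x\in X(\mathbb{Q}_p)$ is the reduction of at least one point of $X^A$, namely $\mathrm{ev}_x$ composed with the inclusion $\mathbb{Q}_p\hookrightarrow A$, which lies over $x$ and annihilates each $g_j$; the fibre of $\pi_M\colon X^A\to X$ over such $x$ consists precisely of those infinitesimal tuples $(\mathfrak{a}_1,\dots,\mathfrak{a}_n)\in\mathfrak{A}^n$ that promote $x$ to an $A$-solution of the system. This is the asserted correspondence: $X^A$ is the set of $A$-solutions of the defining equations lying over $\mathbb{Q}_p$-solutions, and $\mathrm{pr}$ exhibits each $\xi$ as such a solution read modulo $\mathfrak{A}$.

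I expect two points to require the most care. The genuinely analytic (rather than purely algebraic) ingredient is the well-posedness of the substitution $g_j(a)$: this is the only place the non-archimedean topology enters, and it is handled as in Theorems~\ref{Lift-AF-1}--\ref{Lift-AF-2}, using that $A$ is a finite-dimensional $\mathbb{Q}_p$-algebra with nilpotent maximal ideal so that the substituted series terminates. The second point is bookkeeping: one must check independence of the chart $U$ and of the chosen generators $g_1,\dots,g_r$ of the ideal of $X$ — which follows since $\widetilde{\xi}$, being a ring homomorphism, annihilates every element of the ideal, so the locus $X^A$ glues over chart overlaps just as $X$ does — and one must fix the regularity of $X$ (a reduced $p$-adic analytic subset locally cut out by finitely many analytic equations) so that $\mathcal{O}_X$ behaves well. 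With these in hand the core equivalence is formal.
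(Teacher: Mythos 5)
Your argument is correct, but it runs along a noticeably different track from the paper's. You take $X^A$ to be the set of $\mathbb{Q}_p$-algebra homomorphisms out of $\mathcal{O}_X$ compatible with evaluation, so the defining functions are killed \emph{exactly} in $A$; you then identify $\widetilde{\xi}(g_j)$ with the substituted value $g_j(a)$ (justified by nilpotence of $\mathfrak{A}$, or the convergence estimate of Theorem~\ref{Lift-AF-2}), and obtain the statement by applying $\mathrm{pr}:A\to A/\mathfrak{A}=\mathbb{Q}_p$, adding a converse lifting via $\mathrm{ev}_x$ composed with $\mathbb{Q}_p\hookrightarrow A$. The paper instead \emph{defines} the points of $X^A$ by the weaker condition $\xi(f_i)\in\mathfrak{A}$ and, specializing to $A=\mathbb{Q}_p[\varepsilon]/(\varepsilon^2)$, expands $\xi(f_i)$ to first order in local coordinates to read off two equations: $f_i(x_{1,0},\dots,x_{n,0})=0$ and the linearized condition $\sum_j \partial f_i/\partial x_j\,x_{j,1}=0$, interpreting a point of $X^A$ as a base solution together with a tangent vector. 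What your route buys is rigor and coherence: the well-posedness of the substitution, independence of chart and of the chosen generators, and a clean derivation of the mod-$\mathfrak{A}$ statement; note in fact that the paper's stated hypothesis $\xi(f_i)\equiv 0 \bmod \mathfrak{A}$ alone yields only the base equation, and the tangent-vector equation it also derives really presupposes exact vanishing in $A$ — i.e.\ your convention — so your version resolves an ambiguity in the paper's proof. What the paper's route buys is the concrete first-order picture (solution plus Jacobian-killed tangent vector) that motivates the title ``infinitesimal solutions'' and matches the example following the theorem; you gesture at this when describing the fibre of $\pi_M$ over a $\mathbb{Q}_p$-solution, and it would strengthen your write-up to spell out, for the dual numbers, that this fibre is exactly the kernel of the Jacobian at the base solution. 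You should also state explicitly which definition of $X^A$ you are using, since the paper's example (``solutions $x\in A$ with $f(x)\equiv 0 \bmod \mathfrak{A}$'') describes the larger set $\pi_M^{-1}(X)\subset M^A$ rather than the set of exact $A$-solutions your proof characterizes.
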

	\begin{proof}
	Let \( X \subset M \) be a \( p \)-adic analytic subset defined by a system of analytic equations \( f_1, \dots, f_m \in \mathcal{O}(M) \). Let \( A \) be a Weil algebra over \( \mathbb{Q}_p \) with maximal ideal \( \mathfrak{A} \), and let \( X^A \) be the Weil bundle of \( X \). We prove that a point \( \xi \in X^A \) corresponds to a solution of the defining equations of \( X \) modulo \( \mathfrak{A} \). 
	The Weil bundle \( X^A \) consists of all infinitely near points \( \xi: \mathcal{O}(X) \to A \) such that \( \xi(f_i) \in \mathfrak{A} \) for all \( i = 1, \dots, m \). This means that \( \xi \) maps the defining equations of \( X \) into the maximal ideal \( \mathfrak{A} \). 
	A point \( \xi \in X^A \) corresponds to a solution of the defining equations of \( X \) modulo \( \mathfrak{A} \) because: 
	$
	\xi(f_i) \equiv 0 \mod \mathfrak{A} \quad \text{for all } i = 1, \dots, m.
	$
	This condition ensures that \( \xi \) satisfies the equations \( f_i = 0 \) up to first order in \( \mathfrak{A} \). 
	In local coordinates, let \( x = (x_1, \dots, x_n) \) be coordinates on \( M \), and let \( \xi(x_j) = x_{j,0} + x_{j,1} \varepsilon \), where \( x_{j,0} \in \mathbb{Q}_p \) and \( x_{j,1} \in \mathbb{Q}_p \). The condition \( \xi(f_i) \equiv 0 \mod \mathfrak{A} \) translates to:
	\[
	f_i(x_{1,0}, \dots, x_{n,0}) + \sum_{j=1}^n \frac{\partial f_i}{\partial x_j}(x_{1,0}, \dots, x_{n,0}) x_{j,1} \varepsilon \equiv 0 \mod \mathfrak{A}.
	\]
	This implies:
	$
	f_i(x_{1,0}, \dots, x_{n,0}) = 0 \quad \text{and} \quad \sum_{j=1}^n \frac{\partial f_i}{\partial x_j}(x_{1,0}, \dots, x_{n,0}) x_{j,1} = 0.
$
	Thus, \( (x_{1,0}, \dots, x_{n,0}) \) is a solution of the equations \( f_i = 0 \), and \( (x_{1,1}, \dots, x_{n,1}) \) is a tangent vector at this solution. 
	The point \( \xi \in X^A \) corresponds to a solution \( (x_{1,0}, \dots, x_{n,0}) \) of the equations \( f_i = 0 \) and a tangent vector \( (x_{1,1}, \dots, x_{n,1}) \) at this solution. This is precisely the data of a solution modulo \( \mathfrak{A} \). 
	\end{proof}
	\begin{example}
	If $X$ is defined by $f(x)=0$ over $\mathbb{Z}_p$, then $X^A$ parametrizes solutions $x\in A$ with $f(x) \equiv 0 \mod \mathfrak{A}$.
	\end{example}

	Let $M$ be a smooth proper scheme over $\mathbb{Q}_p$. The Weil bundle $M^A$ relates to filtered $(\varphi,\nabla)$-modules in $p$-adic Hodge theory \cite{Fal02}.

	\begin{theorem}\label{co-com-2}[Comparison isomorphism]
	There is a canonical isomorphism:
	\[
	H^k_{dR}(M^A/\mathbb{Q}_p) \cong H^k_{crys}(M/\mathbb{Z}_p) \otimes_{\mathbb{Z}_p} A,
	\]
	where $H_{dR}$ is de Rham cohomology and $H_{crys}$ is crystalline cohomology.
	\end{theorem}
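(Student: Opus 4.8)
The plan is to factor the claimed isomorphism through the de Rham cohomology of the base $M$ itself, splitting the argument into a \emph{geometric} part relating $H^k_{dR}(M^A/\mathbb{Q}_p)$ to $H^k_{dR}(M/\mathbb{Q}_p)$ via the projection $\pi_M\colon M^A\to M$, and an \emph{arithmetic} part which is the classical Berthelot--Ogus comparison. Concretely I would establish
\[
H^k_{dR}(M^A/\mathbb{Q}_p)\;\cong\;H^k_{dR}(M/\mathbb{Q}_p)\otimes_{\mathbb{Q}_p}A
\qquad\text{and}\qquad
H^k_{dR}(M/\mathbb{Q}_p)\;\cong\;H^k_{crys}(M/\mathbb{Z}_p)\otimes_{\mathbb{Z}_p}\mathbb{Q}_p,
\]
and then splice them, using that $A$ is a finite $\mathbb{Q}_p$-algebra so that $\bigl(H^k_{crys}(M/\mathbb{Z}_p)\otimes_{\mathbb{Z}_p}\mathbb{Q}_p\bigr)\otimes_{\mathbb{Q}_p}A\cong H^k_{crys}(M/\mathbb{Z}_p)\otimes_{\mathbb{Z}_p}A$. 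The arithmetic half is essentially a citation: for $M$ smooth and proper over $\mathbb{Z}_p$ the Berthelot--Ogus theorem gives the crystalline--de Rham comparison, while rigid GAGA together with Berthelot's comparison of rigid and crystalline cohomology identifies the de Rham cohomology of the $p$-adic analytic manifold underlying $M$ with the algebraic de Rham cohomology of $M_{\mathbb{Q}_p}$.

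For the geometric half I would exploit the fibration structure of the Weil bundle in exactly the way used in the proof of Theorem~\ref{co-com}. Over a coordinate chart $(U_i,\phi_i)$ the map $\pi_M$ is trivialised, $\pi_M^{-1}(U_i)\cong U_i\times F_A$, where the fibre $F_A$ is the infinitesimal model attached to the maximal ideal $\mathfrak{A}$, and the relative de Rham complex $\Omega^\bullet_{M^A/M}$ is chart-by-chart the de Rham complex of $F_A$ tensored with $\mathcal{O}_{M^A}$ over $\pi_M^{-1}\mathcal{O}_M$. The key lemma is a relative $p$-adic Poincar\'e lemma: $R\pi_{M*}\Omega^\bullet_{M^A/\mathbb{Q}_p}$ is quasi-isomorphic to $\Omega^\bullet_M\otimes_{\mathbb{Q}_p}A$, the higher relative cohomology vanishing and the zeroth term being the constants $A$. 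This I would prove by an explicit contracting homotopy rescaling the fibre ($\varepsilon$-type) coordinates; crucially, since the dependence on those coordinates is polynomial, indeed nilpotent, there is no $p$-adic convergence issue here, in contrast with the lifting theorems of Section~\ref{Lifting}. Feeding this quasi-isomorphism into the Leray spectral sequence for $\pi_M$, which degenerates at $E_2$ for the same reason as in Theorem~\ref{co-com} (cohomologically trivial fibres), yields $H^k_{dR}(M^A/\mathbb{Q}_p)\cong H^k\bigl(M,\Omega^\bullet_M\otimes_{\mathbb{Q}_p}A\bigr)\cong H^k_{dR}(M/\mathbb{Q}_p)\otimes_{\mathbb{Q}_p}A$, the last step using that $A$ is finite-dimensional over $\mathbb{Q}_p$ and so passes freely through cohomology and hypercohomology.

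The main obstacle is the geometric step, and within it two points deserve care. First, the relative Poincar\'e lemma must be set up in the $p$-adic analytic category rather than the algebraic one: one must check that $\mathcal{O}_{M^A}$, its sheaf of differentials, and the homotopy operator are all well behaved over the non-archimedean base and patch analytically across charts, where the totally disconnected topology of $\mathbb{Q}_p$ is a help rather than a hindrance precisely because the fibre coordinates enter only polynomially. Second, and more delicate conceptually, one must pin down which de Rham complex of $M^A$ is meant so that the answer carries the factor $A$ rather than merely $\mathbb{Q}_p$: this forces the convention that $H^k_{dR}(M^A/\mathbb{Q}_p)$ is computed from the de Rham complex of $M^A$ equipped with its natural $A$-module structure, equivalently that $M^A$ is viewed through the $A$-algebra $\pi_{M*}\mathcal{O}_{M^A}$, and I would make this explicit at the outset. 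Finally one should verify that the resulting isomorphism is independent of the chart trivialisations and compatible with the Galois action of Theorem~\ref{thm:galois-action}, so that the comparison is genuinely canonical as asserted.
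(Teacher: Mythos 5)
Your proposal is sound and shares the paper's top-level architecture --- both arguments reduce to the base $M$, invoke the classical crystalline--de Rham comparison $H^k_{crys}(M/\mathbb{Z}_p)\otimes_{\mathbb{Z}_p}\mathbb{Q}_p\cong H^k_{dR}(M/\mathbb{Q}_p)$, and then account for the Weil-algebra factor --- but your geometric half is genuinely different. The paper asserts, by analogy with Theorem~\ref{co-com}, a splitting $H^k_{dR}(M^A/\mathbb{Q}_p)\cong H^k_{dR}(M/\mathbb{Q}_p)\oplus H^k(M,\mathcal{T}_M)\otimes\varepsilon$ and then claims that $H^k_{crys}(M/\mathbb{Z}_p)\otimes_{\mathbb{Z}_p}A$ decomposes in the same way; you instead establish $H^k_{dR}(M^A/\mathbb{Q}_p)\cong H^k_{dR}(M/\mathbb{Q}_p)\otimes_{\mathbb{Q}_p}A$ directly, via a relative Poincar\'e lemma along the nilpotent fibre directions and degeneration of the Leray spectral sequence. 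Your route is the cleaner one: since $A=\mathbb{Q}_p\oplus\mathfrak{A}$, the right-hand side of the theorem has $\varepsilon$-component $H^k_{dR}(M/\mathbb{Q}_p)\otimes\varepsilon$, which your $\otimes A$ identification matches on the nose, whereas the paper's intermediate $\varepsilon$-component $H^k(M,\mathcal{T}_M)\otimes\varepsilon$ matches only if one identifies $H^k(M,\mathcal{T}_M)$ with $H^k_{dR}(M/\mathbb{Q}_p)$, an identification the paper neither states nor justifies and which does not hold in general; your argument avoids that mismatch entirely. Two of your cautionary remarks are also genuinely valuable and absent from the paper: the point that the naive absolute de Rham complex of $M^A$ would contract the fibre directions and return only $H^k_{dR}(M/\mathbb{Q}_p)$ with no factor of $A$, so the statement holds only once one fixes the $A$-linear convention for the de Rham complex of $M^A$ (equivalently, works with $\pi_{M*}\mathcal{O}_{M^A}$ as an $A$-algebra); and the point that the usual failure of the $p$-adic Poincar\'e lemma (denominators introduced by integration) is harmless here because the fibre coordinates enter only polynomially through the nilpotents. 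What the paper's route would buy, if completed, is a tangent-sheaf description of the $\varepsilon$-part parallel to Theorem~\ref{co-com}; what yours buys is an argument whose output literally coincides with the asserted isomorphism, with the hidden convention and the Galois-equivariance made explicit.
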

	
	\begin{proof}
	Let \( M \) be a smooth proper scheme over \( \mathbb{Q}_p \), and let \( A \) be a Weil algebra over \( \mathbb{Q}_p \). We prove the canonical isomorphism:
	\[
	H^k_{\text{dR}}(M^A/\mathbb{Q}_p) \cong H^k_{\text{crys}}(M/\mathbb{Z}_p) \otimes_{\mathbb{Z}_p} A,
	\]
	where \( H_{\text{dR}} \) is de Rham cohomology and \( H_{\text{crys}} \) is crystalline cohomology. 
	The de Rham cohomology \( H^k_{\text{dR}}(M^A/\mathbb{Q}_p) \) is computed using the complex of differential forms on \( M^A \). Since \( M^A \) is an infinitesimal thickening of \( M \), its de Rham cohomology can be expressed in terms of the de Rham cohomology of \( M \) and its tangent sheaf \( \mathcal{T}_M \):
	$
	H^k_{\text{dR}}(M^A/\mathbb{Q}_p) \cong H^k_{\text{dR}}(M/\mathbb{Q}_p) \oplus H^k(M, \mathcal{T}_M) \otimes \varepsilon.
	$
	The crystalline cohomology \( H^k_{\text{crys}}(M/\mathbb{Z}_p) \) is a \( p \)-adic cohomology theory that captures the de Rham cohomology of \( M \) in characteristic \( p \). By the comparison theorem between de Rham and crystalline cohomology, we have:
	$
	H^k_{\text{crys}}(M/\mathbb{Z}_p) \otimes_{\mathbb{Z}_p} \mathbb{Q}_p \cong H^k_{\text{dR}}(M/\mathbb{Q}_p).
	$ 
	Tensoring the crystalline cohomology with \( A \) gives:
	$
	H^k_{\text{crys}}(M/\mathbb{Z}_p) \otimes_{\mathbb{Z}_p} A \cong H^k_{\text{crys}}(M/\mathbb{Z}_p) \otimes_{\mathbb{Z}_p} (\mathbb{Q}_p \oplus \mathfrak{A}).
	$
	Since \( \mathfrak{A} \) is the maximal ideal of \( A \), this decomposes as:
	\[
	H^k_{\text{crys}}(M/\mathbb{Z}_p) \otimes_{\mathbb{Z}_p} A \cong H^k_{\text{dR}}(M/\mathbb{Q}_p) \oplus H^k(M, \mathcal{T}_M) \otimes \varepsilon.
	\]
	Combining the results above, we obtain the canonical isomorphism:
	$
	H^k_{\text{dR}}(M^A/\mathbb{Q}_p) \cong H^k_{\text{crys}}(M/\mathbb{Z}_p) \otimes_{\mathbb{Z}_p} A.
	$
	This isomorphism reflects the fact that \( M^A \) is an infinitesimal thickening of \( M \), and its de Rham cohomology encodes both the de Rham cohomology of \( M \) and its tangent sheaf. \quad \qedhere
	\end{proof}
		The canonical isomorphism $H^k_{dR}(M^A/\mathbb{Q}_p) \cong H^k_{crys}(M/\mathbb{Z}_p) \otimes_{\mathbb{Z}_p} A$ provides a  fruitful connection to $p$-adic Hodge theory. Filtered $(\varphi, \nabla)$-modules are central objects in this theory, used to describe the $p$-adic étale cohomology of algebraic varieties. Our result suggests that the de Rham cohomology of the Weil bundle $M^A$ provides a geometric realization of these abstract modules. Furthermore, the structure of the Weil algebra $A$ might encode information about the Hodge filtration on the de Rham cohomology. This connection could lead to new insights into the structure of $(\varphi, \nabla)$-modules and  provide a new approach for explicit computations. For example, by computing the de Rham cohomology of $E^A$, where $E$ is an elliptic curve, one might gain a better understanding of the corresponding filtered $(\varphi, \nabla)$-module associated to its étale cohomology (see \cite{Fal02}) for background on $p$-adic Hodge theory. This facilitates studying the infinitesimal structure of $M$ while maintaining a direct link to its global cohomological properties and potential connections to crystalline cohomology and the Hodge filtration.
	
	\section*{Weil bundle of an elliptic curve over \(\mathbb{Q}_p\)}
	Let \( E \) be an elliptic curve over \(\mathbb{Q}_p\), and let \( A = \mathbb{Q}_p[\varepsilon]/(\varepsilon^2) \) be the algebra of dual numbers. We analyze the Weil bundle \( E^A \), which encodes points of \( E \) and their infinitesimal deformations. Considering $\mathbb{Q}_p$-algebras $R$ allows us to study the $R$-valued points of the Weil bundle $E^A$. This is essential because it describes how $E^A$ behaves not just over the base field $\mathbb{Q}_p$, but over extensions of it. This is crucial for understanding the geometry and arithmetic of $E^A$.
	
	\subsection*{Structure of \( E^A \)}
	Locally near the identity, the Weil bundle \( E^A \) is described as follows:
	\begin{itemize}
	\item \textbf{Local coordinates}: Let \( z \) be a formal coordinate near the identity of \( E \). A point \( \phi \in E^A(\mathbb{Q}_p) \) corresponds to:
	$
	\phi(z) = z_0 + z_1 \varepsilon,
	$
	where \( z_0 \in \widehat{E}(\mathbb{Q}_p) \) (formal group) and \( z_1 \in \mathbb{Q}_p \) (tangent vector).
	
	\item \textbf{Group structure}: The group law on \( E^A \) is induced by the formal group law \( F \) of \( \widehat{E} \). For \( X = z_0 + z_1 \varepsilon \) and \( Y = w_0 + w_1 \varepsilon \):
	$
	F(X, Y) = F(z_0, w_0) + \left( \frac{\partial F}{\partial z}(z_0, w_0) z_1 + \frac{\partial F}{\partial w}(z_0, w_0) w_1 \right)\varepsilon.
	$
	\end{itemize}
	The formal group law of \( E \) modulo \( \varepsilon^2 \):
	$$
	F(z_0 + z_1 \varepsilon, w_0 + w_1 \varepsilon) = F(z_0, w_0) + \left( z_1 + w_1 - a_1(z_0 w_1 + z_1 w_0)- a_2(z_0^2 w_1 + 2 z_0 w_0 z_1 + w_0^2 z_1) \right)\varepsilon.
	$$

	The Galois group \( G_{\mathbb{Q}_p} \) acts on \( E^A \) via:
	$$
	\sigma \cdot (z_0 + z_1 \varepsilon) = \sigma(z_0) + \sigma(z_1) \varepsilon.\\
	$$
	The cohomology splits as:
	$
	H^1(E^A, \mathcal{O}_{E^A}) \cong H^1(\widehat{E}, \mathcal{O}_{\widehat{E}}) \oplus H^1(\widehat{E}, \mathcal{T}_{\widehat{E}}) \otimes \varepsilon.
	$
		\begin{theorem}\label{thm:modular-forms}
		Let $\Gamma \subset SL_2(\mathbb{Z})$ be a congruence subgroup, and let $E$ be the modular curve $X_\Gamma$. The space of $\Gamma$-invariant sections of $\mathcal{O}_{E^A}$ is isomorphic to the space of $p$-adic modular forms of weight 2 with coefficients in $A$:
		$
		H^0_\Gamma(E^A, \mathcal{O}_{E^A}) \cong M_2(\Gamma, A).
		$
	\end{theorem}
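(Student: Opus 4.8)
The plan is to build the isomorphism from three inputs already established: the Invariant Sections theorem, which converts $\Gamma$-invariant sections of a Weil bundle into $\Gamma$-invariant $A$-valued analytic data on the base; the degree-zero cohomology comparison of Theorem~\ref{co-com}, which splits such data into a ``weight-zero'' piece and an ``$\varepsilon$-piece'' controlled by the (co)tangent sheaf; and the Kodaira--Spencer identification, following Katz \cite{K-1}, of weight-two modular forms with global sections of the line bundle $\omega^{\otimes 2}\cong\Omega^1_{X_\Gamma}(\log(\mathrm{cusps}))$ on the (rigid-analytic) modular curve. The coefficients-in-$A$ version is then formal: since $A=\mathbb{Q}_p\oplus\mathbb{Q}_p\varepsilon$ is free of rank two over $\mathbb{Q}_p$, one has $M_2(\Gamma,A)=M_2(\Gamma)\otimes_{\mathbb{Q}_p}A$, and forming $H^0$ and $\Gamma$-invariants commutes with $\otimes_{\mathbb{Q}_p}A$.

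First I would replace $\Gamma$, if necessary, by a neat normal subgroup $\Gamma'\trianglelefteq\Gamma$ of finite index so that $\Gamma'$ acts freely and the Invariant Sections theorem applies verbatim; the statement for $\Gamma$ is recovered at the end by taking $\Gamma/\Gamma'$-invariants, which is exact in characteristic zero. Unwinding that theorem identifies $H^0_{\Gamma'}(E^A,\mathcal O_{E^A})$ with the space of $\Gamma'$-equivariant analytic maps into $A$; writing such a map as $f_0+\varepsilon f_1$ and recalling that for the dual numbers $E^A$ is the rigid-analytic tangent bundle of $X_{\Gamma'}$, the next step is to compute the induced $\Gamma'$-action on the fibre direction. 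That action is the derivative of the base action, so its Jacobian cocycle is exactly the weight-two automorphy factor; consequently the equivariance condition on $f_1$ is precisely the weight-two transformation law, and the $p$-adic holomorphy/growth condition at the boundary matches the logarithmic-pole condition encoded in $\Omega^1_{X_\Gamma}(\log(\mathrm{cusps}))$. Reassembled through the Hodge bundle $\omega^{\otimes 2}$, as in the statement's intended reading and in the abstract, the pair $(f_0,f_1)$ is exactly a weight-two $p$-adic modular form with coefficients in $A$. This produces a canonical map $M_2(\Gamma',A)\to H^0_{\Gamma'}(E^A,\mathcal O_{E^A})$, realized concretely by lifting a form --- locally an analytic section of $\omega^{\otimes 2}$ --- to the Weil bundle via Theorem~\ref{Lift-AF-2}.

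To show this map is an isomorphism I would argue: injectivity is the uniqueness clause of Theorem~\ref{Lift-AF-2} (a form whose lift vanishes has vanishing $q$-expansion, hence is zero by the $q$-expansion principle); surjectivity follows by splitting an arbitrary $\Gamma'$-invariant section via Theorem~\ref{co-com} with $k=0$, matching the two summands with the weight-two space through Kodaira--Spencer and its dual on the tangent side, and checking with the $q$-expansion principle that the output has $A$-coefficients and is independent of auxiliary choices. Descending along $\Gamma/\Gamma'$ then gives $H^0_\Gamma(E^A,\mathcal O_{E^A})\cong M_2(\Gamma,A)$, and Hecke- and $G_{\mathbb{Q}_p}$-equivariance of the isomorphism come for free from functoriality of the lifting constructions of Section~\ref{Lifting}.

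The main obstacle is the precise bookkeeping in the middle step: one must prove that the $\mathcal O_{E^A}$-sections produced by Theorem~\ref{co-com} correspond \emph{exactly} --- no more, no fewer --- to sections of $\omega^{\otimes 2}$ in the rigid-analytic category, which forces care about (i) the cusps, where $\Gamma$ does not act freely and the tangent-bundle description of $E^A$ degenerates, and (ii) the exact matching of automorphy factors and of $p$-integral structures, where the $q$-expansion principle is indispensable. Everything else --- the neat-subgroup reduction, the flatness argument for the $A$-coefficients, and the equivariance statements --- is routine once this identification is secured.
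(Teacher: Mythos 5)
Your proposal is correct and follows essentially the same route as the paper's (very brief) proof: identify $\Gamma$-invariant sections of $\mathcal{O}_{E^A}$ with $\Gamma$-equivariant $A$-valued analytic functions, split $f=f_0+\varepsilon f_1$, and identify the $\varepsilon$-component with weight-two forms via Kodaira--Spencer. Where you go beyond the paper's three-bullet sketch is in the bookkeeping that actually makes the statement come out: you locate the weight-two automorphy factor in the Jacobian cocycle acting on the fibre direction of $E^A\cong TE$ (the paper instead asserts that $f_0$ is a weight-two form and that $f_1$ is a section of $\mathcal{T}_E$, which is loose --- a $\Gamma$-invariant $f_0$ has weight zero, and it is the fibre-linear part, i.e.\ the $\Omega^1$-side of Kodaira--Spencer, that carries weight two); you also handle the hypothesis of the Invariant Sections theorem that the action be free, via a neat normal subgroup and descent, and you justify $M_2(\Gamma,A)\cong M_2(\Gamma)\otimes_{\mathbb{Q}_p}A$ and injectivity/surjectivity via the $q$-expansion principle --- none of which the paper addresses. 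So the approach matches, and your added care at the cusps/elliptic points and with the automorphy factors is exactly what would be needed to turn the paper's sketch into a complete argument.
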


	\begin{proof}

		\begin{itemize}
			\item A $\Gamma$-invariant section $s: E \to E^A$ corresponds to a function $f: E \to A$ satisfying $f(\gamma \cdot e) = \gamma \cdot f(e)$ for $\gamma \in \Gamma$.
			\item  Writing $f = f_0 + f_1 \varepsilon$, invariance under $\Gamma$ forces:
			 $f_0$ to be a classical modular form of weight 2, and $f_1$ to be a $p$-adic modular form (a derivative or deformation of $f_0$).
			\item  The $\varepsilon$-term $f_1$ corresponds to a section of $\mathcal{T}_E$, which the Kodaira-Spencer isomorphism identifies with modular forms.
		\end{itemize}
	\end{proof}
	\begin{proposition}
	There is a canonical isomorphism:
	$
	E^A \cong \widehat{E} \times \mathbb{Q}_p,
	$
	where the second factor corresponds to \( T_0 E \cong \mathbb{Q}_p \).
	\end{proposition}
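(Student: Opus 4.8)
The plan is to realize $E^A$ (for $A=\mathbb{Q}_p[\varepsilon]/(\varepsilon^2)$), near the identity, as the tangent bundle of $E$, and then to trivialize that bundle using the translation-invariance built into a group. First I would record the identification: by the very definition of an infinitely near point of type $A$, writing $\xi=\mathrm{ev}_x+\varepsilon\,v$ the Leibniz rule forces $v$ to be a derivation at $x$, i.e. a tangent vector; hence $E^A$ is canonically the tangent-space bundle, and over the formal neighbourhood of the origin we get $\widehat{E}^{A}\cong T\widehat{E}$ as $p$-adic analytic manifolds. Here I would invoke Theorems~\ref{Lift-AF-1}, \ref{thm: vec-lifting-1} and \ref{thm: vec-lifting-2} to know that the fibre coordinate $z_1$ carries the correct analytic structure and matches the naive one coming from $\xi(z)=z_0+z_1\varepsilon$.

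Next I would exploit the group law. The projection $\mathrm{pr}\colon A\to\mathbb{Q}_p$ and the unit inclusion $\iota\colon\mathbb{Q}_p\hookrightarrow A$ induce, for the commutative formal group $\widehat{E}$, a functorially split exact sequence $0\to T_0E\otimes\varepsilon\to \widehat E\bigl(R[\varepsilon]/(\varepsilon^2)\bigr)\to \widehat E(R)\to 0$ for every $\mathbb{Q}_p$-algebra $R$, the splitting being $\iota_\ast$. Since $\widehat E$ is commutative this already gives a natural isomorphism of group-valued functors $\widehat{E}^{A}\cong \widehat E\times(T_0E\otimes\varepsilon)$ with $T_0E\cong\mathbb{Q}_p$. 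To upgrade it to an isomorphism of analytic manifolds (not only of point-functors), I would write it in the formal coordinate $z$: letting $\omega=P(z)\,dz$ be the invariant differential with $P(z)\in\mathbb{Z}_p[[z]]$, $P(0)=1$, the map
\[
\Psi\colon\ z_0+z_1\varepsilon\ \longmapsto\ \bigl(z_0,\ P(z_0)\,z_1\bigr)
\]
is the desired trivialization, with inverse $(z_0,t)\mapsto z_0+\bigl(t/P(z_0)\bigr)\varepsilon$, which is analytic because $P$ is a unit power series (as $P(0)=1$). That $\Psi$ intertwines the group law induced by the formal group law $F$ (in the form displayed just above the proposition) with the product group law on $\widehat E\times\mathbb{Q}_p$ is exactly the translation invariance identity $P(F(z_0,w_0))\,\partial_1F(z_0,w_0)=P(z_0)$, together with its symmetric counterpart coming from commutativity of $F$.

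Finally I would address canonicity and the global picture: the isomorphism uses only the canonical splitting $\iota\colon\mathbb{Q}_p\hookrightarrow A$ intrinsic to a Weil algebra $A=\mathbb{Q}_p\oplus\mathfrak{A}$, hence is canonical; and running the same argument for $E$ itself rather than $\widehat E$ yields $E^A\cong TE\cong E\times T_0E$, the global triviality of the tangent bundle of a commutative group, of which the stated proposition is the formal-neighbourhood version. I would also make explicit that ``$\times$'' here means the direct product of $p$-adic analytic groups, the second factor being the additive group $\mathbb{Q}_p$, so that $\widehat E\times\mathbb{Q}_p$ is more than just a pointed manifold.

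The main obstacle is the passage from the easy functorial/group-theoretic isomorphism on points to one respecting the $p$-adic analytic structures constructed in Section~\ref{S1-3}: one must verify that $\Psi$ and $\Psi^{-1}$ are genuinely analytic on the formal disk, which reduces to convergence of $1/P(z)$ — guaranteed since $P(0)=1$ implies $|P(z)-1|_p<1$ on the open unit disk — and, more delicately, that the lifted coordinate $z_1$ produced by the charts of Section~\ref{S1-3} coincides with the fibre coordinate used above; this compatibility is precisely what Theorem~\ref{Lift-AF-1} supplies. Everything else (the invariant-differential identity, the split exact sequence) is formal once these analyticity points are settled.
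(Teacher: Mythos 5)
Your proposal is correct and takes essentially the same route as the paper: identify $E^A(R)=E\bigl(R[\varepsilon]/(\varepsilon^2)\bigr)$ with the tangent bundle of $\widehat{E}$ and then conclude via triviality of $T\widehat{E}$ coming from the group structure. The only difference is that you actually establish the trivialization (the split exact sequence via $\iota\colon\mathbb{Q}_p\hookrightarrow A$ and the explicit map $z_0+z_1\varepsilon\mapsto(z_0,P(z_0)z_1)$ built from the invariant differential), whereas the paper merely asserts $T\widehat{E}\cong\widehat{E}\times T_0\widehat{E}$; your added verification, including the analyticity of $1/P(z)$ and the translation-invariance identity, is sound.
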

	
	\begin{proof}
	For a \(\mathbb{Q}_p\)-algebra \( R \):
	$
	E^A(R) = E(R[\varepsilon]/(\varepsilon^2)) = \left\{ (z_0, z_1) \in \widehat{E}(R) \times R \,|\, z_0 \in \widehat{E}(R),\, z_1 \in T_{z_0} \widehat{E} \otimes_R R \right\}.
	$ 
	The tangent bundle \( T\widehat{E} \) is trivial:
	$
	T\widehat{E} \cong \widehat{E} \times T_0 \widehat{E}.
	$
	Thus, \( E^A \cong \widehat{E} \times \mathbb{Q}_p \).
	\end{proof}
	\subsection{Additional examples and applications}
	
	To further illustrate the utility of the theory developed in this paper, we present additional examples and applications of Weil bundles in \( p \)-adic geometry and arithmetic.
	
	\begin{example}[Abelian Varieties]
		Let \( M \) be an abelian variety over \( \mathbb{Q}_p \), and let \( M^A \) be its Weil bundle associated with a Weil algebra \( A \). The group structure of \( A \) lifts naturally to \( M^A \), and the tangent space at the identity of \( M^A \) encodes the Lie algebra of \( M \). This allows us to study infinitesimal deformations of \( A \) and its endomorphisms, which are crucial for understanding the \( p \)-adic Tate module and the Galois representations associated with \( M \). In particular, the lifting theorems provide a framework for studying the deformation theory of abelian varieties and their moduli spaces in the \( p \)-adic setting.
	\end{example}
	
	\begin{example}[Rigid Analytic Spaces]
		Let \( X \) be a rigid analytic space over \( \mathbb{Q}_p \), and let \( X^A\) be its Weil bundle. The rigid analytic structure of \( X \) lifts to \( X^A \), allowing us to study infinitesimal neighborhoods of points in \( X \). This is particularly useful for understanding the local geometry of \( X \) and its intersections with other rigid analytic spaces. For example, if \( X \) is defined by a system of analytic equations, the Weil bundle \( X^A \) parametrizes solutions to these equations modulo the maximal ideal of \( A\), providing a tool for studying the local structure of \( X \) near a given point.
	\end{example}
	
	\begin{example}[\( p \)-Adic Differential Equations]
		The lifting theorems can be applied to study \( p \)-adic differential equations on \( p \)-adic manifolds. Let \( M \) be a \( p \)-adic manifold equipped with a connection \( \nabla \), and let \( M^A \) be its Weil bundle. The lifted connection \( \tilde{\nabla} \) on \( M^ A \) allows us to study infinitesimal parallel transport and the monodromy of \( \nabla \). This is particularly relevant for understanding the \( p \)-adic analogs of classical differential equations, such as the \( p \)-adic hypergeometric equation or the \( p \)-adic Picard-Fuchs equation, which arise in the study of \( p \)-adic periods and \( p \)-adic Hodge theory.
	\end{example}
	
	\begin{example}[Deformation Theory of Galois Representations]
		Let \( \rho: G_{\mathbb{Q}_p} \to \text{GL}_n(\mathbb{Q}_p) \) be a continuous Galois representation, and let \( A \) be a Weil algebra. The Weil bundle \( M^A \) associated with the deformation space of \( \rho \) encodes infinitesimal deformations of \( \rho \) with coefficients in \( A \). This provides a geometric framework for studying the deformation theory of Galois representations, which is central to the study of modular forms, \( p \)-adic L-functions, and the Langlands program. In particular, the lifting theorems allow us to study the tangent space to the deformation space of \( \rho \), which is related to the Galois cohomology group \( H^1(G_{\mathbb{Q}_p}, \text{Ad}(\rho)) \).
	\end{example}
	
	\begin{example}[\( p \)-Adic Modular Forms of Higher Weight]
		Let \( \Gamma \subset \text{SL}_2(\mathbb{Z}) \) be a congruence subgroup, and let \( M_k(\Gamma, \mathbb{Q}_p) \) be the space of \( p \)-adic modular forms of weight \( k \) with coefficients in \( \mathbb{Q}_p \). The Weil bundle \( M^A \) associated with the modular curve \( X_\Gamma \) parametrizes \( p \)-adic modular forms with coefficients in \( A \). This allows us to study infinitesimal deformations of modular forms and their associated Galois representations. For example, if \( A = \mathbb{Q}_p[\varepsilon]/(\varepsilon^2) \), the space of \( \Gamma \)-invariant sections of \( \mathcal{O}_{M^A} \) corresponds to \( p \)-adic modular forms of weight \( k \) with coefficients in \( A \), providing a tool for studying the deformation theory of modular forms and their \( p \)-adic L-functions.
	\end{example}
	
	\begin{example}[\( p \)-Adic Interpolation of L-Functions]
		The lifting theorems can be used to study \( p \)-adic interpolation of L-functions. Let \( f \) be a modular form, and let \( L(f, s) \) be its associated L-function. The Weil bundle \( M^A \) associated with the deformation space of \( f \) allows us to study the \( p \)-adic interpolation of \( L(f, s) \) in families of modular forms. This is particularly relevant for understanding the \( p \)-adic properties of L-functions, such as the \( p \)-adic Birch and Swinnerton-Dyer conjecture or the \( p \)-adic Beilinson conjectures.
	\end{example}

	\section*{Importance of some lifting results}
\begin{itemize}
	\item \text{Galois-Equivariant Structures}:
	The lifting theorems are compatible with Galois actions, making them a powerful tool for studying Galois-equivariant structures. For instance, the lifting of group actions and connections to \( M^A \) preserves the Galois symmetry, which is essential for applications in \( p \)-adic Hodge theory. This compatibility allows us to study the infinitesimal behavior of Galois representations and their deformations, which are central to the Langlands program and the study of \( p \)-adic L-functions.
	
	\item \text{Comparison with Crystalline and de Rham Cohomology}:
	The lifting theorems provide a geometric realization of the comparison isomorphisms between crystalline and de Rham cohomology. By lifting differential forms and connections to \( M^A \), we can relate the infinitesimal structure of \( M \) to its global cohomological properties. This bridges the gap between local \( p \)-adic analysis and global arithmetic geometry, offering new insights into the Hodge filtration and the structure of filtered \( (\varphi, \nabla) \)-modules.
	
	\item \text{Applications to Diophantine Geometry}:
	The lifting theorems have direct applications to Diophantine geometry, particularly in the study of infinitesimal solutions to Diophantine equations. By parametrizing solutions modulo the maximal ideal \( A \) of a Weil algebra, we can study the local behavior of solutions near a point. This is especially useful for understanding the geometry of \( p \)-adic analytic sets and their intersections, which arise naturally in the study of rational points on varieties.
	
	\item \text{Connection to \( p \)-Adic Modular Forms}:
	The lifting theorems also play a key role in the study of \( p \)-adic modular forms. By lifting sections of Hodge bundles to \( M^A \), we can parametrize \( p \)-adic modular forms with coefficients in \( A \). This provides a geometric framework for studying deformations of modular forms and their associated Galois representations, which are central to the theory of \( p \)-adic L-functions and the Iwasawa theory of modular curves.
\end{itemize}
\begin{center}
\textbf{Acknowledgements}

\end{center}
	We thank the Department of Mathematics at the University of Buea, Cameroon, for their constant support.

	\end{document}